\newcommand{\field}[1]{\mathbb{#1}}
\newcommand{\NN}{\field{N}}
\newcommand{\TT}{\field{T}}
\newcommand{\ZZ}{\field{Z}}
\newcommand{\Ff}{\mathcal{F}}
\newcommand{\Gg}{\mathcal{G}}
\newcommand{\Hh}{\mathcal{H}}
\newcommand{\Kk}{\mathcal{K}}
\newcommand{\Ll}{\mathcal{L}}
\newcommand{\Mm}{\mathcal{M}}
\newcommand{\Oo}{\mathcal{O}}
\newcommand{\Ss}{\mathcal{S}}
\newcommand{\Xx}{\mathcal{X}}
\newcommand{\la}{\langle}
\newcommand{\ra}{\rangle}
\newcommand{\ds}{\displaystyle}
\newtheorem{thm}{Theorem}[section]
\newtheorem{prop}[thm]{Proposition}
\theoremstyle{definition}
\newtheorem{dfn}[thm]{Definition}
\theoremstyle{remark}
\newtheorem{rmk}[thm]{Remark}
\newtheorem{example}[thm]{Example}
\newtheorem*{examples*}{Examples}
\numberwithin{equation}{subsection}
\numberwithin{equation}{subsection}
\title{ On  groupoids and $C^*$-algebras from  self-similar actions}
\author{Valentin Deaconu}
\address{Valentin Deaconu \\ Department of Mathematics \& Statistics\\ University
of Nevada\\ Reno NV 89557-0084\\ USA} \email{vdeaconu@unr.edu}
\keywords{Homology of groupoids; Self-similar action; similarity of groupoids; Cuntz-Pimsner algebra; K-theory}
\subjclass{Primary 46L05.}
\begin{document}
\begin{abstract}
Given a self-similar groupoid action $(G,E)$ on the path space of a finite graph, we study the  associated Exel-Pardo \' etale groupoid $\Gg(G,E)$ and its $C^*$-algebra $C^*(G,E)$. 
We review some facts about groupoid actions, skew products and semi-direct products and generalize a result of Renault about similarity of  groupoids which resembles Takai duality. We also describe a general strategy to compute the $K$-theory of $C^*(G,E)$ and the homology of $\Gg(G,E)$ in certain cases and  illustrate with an example. 
\end{abstract}
\maketitle
\section{introduction}

\bigskip

\bigskip

Informally, a self-similar action is given by isomorphisms between parts of an object (think fractals or Julia sets) at different scales. Self-similar actions were studied intensely after exotic examples of groups acting on rooted trees and generated by finite automata,  like  infinite residually finite torsion groups, and groups of intermediate growth were constructed by Grigorchuk in the 1980's. Using the Pimsner construction from a $C^*$-correspondence, Nekrashevych introduced the $C^*$-algebras associated  with self-similar group actions in \cite{N, N1}, where important results about their structure and their $K$-theory were obtained. Motivated by the construction of all Kirchberg algebras in the UCT class using topological graphs given by Katsura, in \cite {EP} Exel and Pardo introduced self-similar group actions on graphs  and realized their $C^*$-algebras as groupoid $C^*$-algebras.

In this paper, we are interested in self-similar actions of groupoids $G$ on the path space of finite directed graphs $E$ as introduced and studied in \cite{LRRW}, where the main  goal was to find KMS states on some resulting dynamical systems. We generalize certain results of Exel and Pardo, in particular we define a groupoid $\Gg(G,E)$  and discuss the structure of the $C^*$-algebra $C^*(G,E)$, defined as a Cuntz-Pimsner algebra of a $C^*$-correspondence over $C^*(G)$. The $C^*$-algebra  $C^*(G,E)$ has a natural gauge action and contains copies of $C^*(E)$ and $C^*(G)$. In general, its structure is rather intricate; in a particular case, $C^*(G,E)\cong C^*(E)\rtimes G$. 

We begin with a review of \' etale groupoid homology, then we recall  some facts about groupoid actions, skew products and semi-direct products and generalize a result of Renault about similarity of  groupoids in the spirit of Takai duality. We also describe a general strategy to compute the $K$-theory of $C^*(G,E)$ and the homology of $\Gg(G,E)$ in certain cases. We illustrate with an example. 

We expect that many of our results will be true for self-similar actions of groupoids  on the path space of infinite graphs. For the  case when $G$ is a group, see \cite{EPS, L}.

\bigskip

\bigskip

\section{Homology of \'etale groupoids}

\bigskip

A groupoid $G$ is a  small category with inverses. The set of objects is denoted by $G^{(0)}$. We will use $d$ and $t$ for the domain and terminus maps $d,t:G \to G^{(0)}$ to distinguish them from the range and source maps $r,s$ on directed graphs. For $u,v \in G^{(0)}$, we write \[G_u =\{g\in G: d(g)=u\},\;\;G^v =\{g\in G: t(g)=v\},\;\; G_u^v=G_u\cap G^v.\]
The set of composable pairs is denoted $G^{(2)}$.

An \'etale groupoid is a topological groupoid where the terminus map $t$ (and necessarily the
domain map  $d$) is a local homeomorphism (as a map from $G$ to $G$). The unit space $G^{(0)}$ of an \'etale groupoid is always an open subset of $G$.

\begin{dfn} Let $G$ be an \' etale groupoid. A bisection is an open subset $U\subseteq  G$ such that $d$ and $t$ are both injective when restricted to $U$.
\end{dfn}

Two units $x,y \in G^{(0)}$ belong to the same $G$-orbit if there exists $g \in G$ such that $d(g) = x$ and $t(g) = y$. We denote by orb$_G(x)$ the $G$-orbit of $x$. When every $G$-orbit is dense in $G^{(0)}$, the groupoid $G$ is called minimal. An open set $V\subseteq G^{(0)}$ is called $G$-full if for every $x \in G^{(0)}$ we have orb$_G (x) \cap V \neq \emptyset$.
We denote by $G_V$ the subgroupoid $\{g \in G\; | \; d(g), t(g) \in V\}$, called the restriction of $G$ to $V$. When $G$ is \'etale, the restriction $G_V$ is an open \'etale subgroupoid with unit space $V$.

The isotropy group of a unit $x\in G^{(0)}$ is the group \[G_x^x :=\{g\in G\; | \; d(g)=t(g)=x\},\] and the isotropy bundle is
\[G' :=\{g\in G\; | \; d(g)=t(g)\}= \bigcup_{x\in G^{(0)}} G_x^x.\]
A groupoid $G$ is said to be principal if all isotropy groups are trivial, or equivalently, $G' = G^{(0)}$. We say that $G$ is effective if the interior of $G'$ equals $G^{(0)}$.

\begin{dfn} A groupoid $G$ is elementary if it is compact and principal. A groupoid $G$ is an $AF$ groupoid if there exists an ascending chain of open elementary subgroupoids $G_1\subseteq  G_2\subseteq ...\subseteq G$ such that $G = \bigcup_{i=1}^\infty G_i$. A groupoid $G$ is ample if it is \'etale and $G^{(0)}$ is zero-dimensional; equivalently, $G$ is ample if it has a basis of compact open bisections.
\end{dfn}

We recall now the definion of homology of \'etale groupoids which was introduced by Crainic and Moerdijk in \cite{CM}. Let $A$ be an Abelian group and
let $\pi: X \to Y$ be a local homeomorphism between two locally compact Hausdorff spaces. Given any $f \in C_c(X, A)$ we define
\[\pi_*(f)(y):= \sum_{\pi(x)=y} f(x).\]

It follows that $\pi_*(f)\in C_c(Y,A)$.
Given an \' etale groupoid $G$, let $G^{(1)}=G$ and for $n\ge 2$ let $G^{(n)}$ be the space of composable strings
of $n$ elements in $G$ with the product topology. For $n\ge 2$ and  $ i = 0,...,n$, we let $\partial_i : G^{(n)} \to G^{(n-1)}$ be the face maps defined by

\[\partial_i(g_1,g_2,...,g_n)=\begin{cases}(g_2,g_3,...,g_n)&\;\text{if}\; i=0,\\
 (g_1,...,g_ig_{i+1},...,g_n) &\;\text{if} \;1\le i\le n-1,\\
(g_1,g_2,...,g_{n-1})& \;\text{if}\; i = n.\end{cases}\]

We define the homomorphisms $\delta_n : C_c(G^{(n)}, A) \to C_c(G^{(n-1)}, A)$ given by 
\[\delta_1=d_*-t_*,\;\; \delta_n=\sum_{i=0}^n(-1)^i\partial_{i*}\; \text{for}\; n\ge 2.\]
It can be verified that $\delta_n\circ\delta_{n+1}=0$ for all $n\ge 1$.

The homology groups $H_n(G, A)$ are by definition  the homology groups of the chain complex $C_c(G^{(*)},A)$ given by
\[
0\stackrel{\delta_0}{\longleftarrow} C_c(G^{(0)},A)\stackrel{\delta_1}{\longleftarrow}C_c(G^{(1)},A)\stackrel{\delta_2}{\longleftarrow}C_c(G^{(2)},A)\longleftarrow\cdots,\]
i.e. $H_n(G,A)=\ker \delta_n/\text{im } \delta_{n+1}$, where $\delta_0=0$.
If $A=\ZZ$, we write $H_n(G)$ for $H_n(G,\ZZ)$.

The following HK-conjecture of Matui states that the homology of an \'etale  groupoid
refines the $K$-theory of the reduced groupoid $C^*$-algebra. 
 Let $G$ be a minimal effective ample Hausdorff groupoid with compact unit space. Then

\[K_i(C_r^*(G)) \cong \bigoplus_{n=0}^\infty H_{2n+i}(G), \;\text{for}\;  i = 0, 1.\]

Recently,  this conjecture was the source of intense research. It was confirmed for several groupoids like $AF$-groupoids, transformation groupoids of Cantor minimal systems, groupoids of shifts of finite type and products of groupoids of shifts of finite type, see \cite{M}. The homology of ample Hausdorff groupoids was investigated in \cite{FKPS}, with emphasis on the  Renault-Deaconu groupoids associated to $k$ pairwise-commuting local homeomorphisms of a zero-dimensional space. It was shown that the homology of $k$-graph groupoids can be computed in terms of the adjacency matrices, using spectral sequences and a chain complex developed by Evans in \cite{E}. The HK-conjecture was  also confirmed for groupoids on one-dimensional solenoids in \cite{Y}. Recently, counterexamples to the HK-conjecture of Matui were found by  Scarparo in \cite{S}  and by Ortega and Sanchez in \cite{OS}.

\bigskip

\section{Groupoid actions and similarity}

\bigskip

We recall  the concept of a groupoid action on another groupoid from \cite{AR}, page 122 and from \cite{De}. 
\begin{dfn} \label{ga}
A topological groupoid $G$ acts (on the right) on another topological groupoid $H$ if there are a continuous open surjection $p: H\to G^{(0)}$ and a continuous map $H\ast G\to H$, write $(h,g)\mapsto h\cdot g$ where
  \[H\ast G=\{(h,g)\in H\times G \mid t(g)=p(h)\}\] 
   such that
   
   \medskip
   
   i)  $p(h\cdot g)=d(g)$  for all $(h,g)\in H\ast G$,
   
   \medskip
 
 ii) $(h,g_1)\in H\ast G$ and $(g_1, g_2)\in G^{(2)}$ implies that $(h, g_1g_2)\in H\ast G$ and \[ h\cdot (g_1g_2)=(h\cdot g_1)\cdot g_2,\]
 
 \medskip
 
 iii) $(h_1,h_2)\in H^{(2)}$ and $( h_1h_2, g)\in H\ast G$ implies $(h_1, g), (h_2, g)\in H\ast G$ and \[(h_1h_2)\cdot g=(h_1\cdot g)(h_2\cdot g),\]
 
 \medskip
 
 iv) $ h\cdot p(h)=h$ for all $h\in H$.
 
 \noindent The action is called free if $h\cdot g=h$ implies $g=p(h)$ and transitive if for all $h_1,h_2\in H$ there is $g\in G$ with $h_2=h_1\cdot g$.
 \end{dfn}
  Note that if $G$ acts on $H$ on the right, we can define a left action of $G$ on $H$ by taking $g\cdot h:= h\cdot g^{-1}$ and viceversa.
 \begin{example}
Given $G$ a topological groupoid, a $G$-module in \cite{T} is a topological groupoid $A$ with domain and terminus maps equal to $p:A\to G^{(0)}$ such that $A_x^x$ is an abelian group for all $x\in G^{(0)}$, $G$ acts on $A$ as a space and such that for each $g\in G$ the action map $\alpha_g:A_{d(g)}\to A_{t(g)}$ is a group homomorphism. In particular, $A$ can be a trivial group bundle $G^{(0)}\times D$ for $D$ an abelian group and $\alpha_g=id_D$ for all $g\in G$.
\end{example}

 \begin{rmk}
If $G$ acts on the groupoid $H$, then  $G$  acts on the unit space $H^{(0)}$ using the restriction $p_0:=p|_{H^{(0)}}:H^{(0)}\to G^{(0)}$ and we have $p=p_0\circ t=p_0\circ d$, where $d, t:H\to H^{(0)}$. In particular,
\[p(h^{-1})=p_0(t(h^{-1}))=p_0(d(h))=p(h).\]
Using the fact that $h=hd(h)=t(h)h$, it follows that \[h\cdot g=( h\cdot g)( d(h)\cdot g)=( t(h)\cdot g)( h\cdot g),\]
so  we deduce that  $d( h\cdot g)=d(h)\cdot g$ and $t( h\cdot g)= t(h)\cdot g$.
\end{rmk}
 \begin{dfn}
If $G$ acts on $H$, then the semi-direct product groupoid $H\rtimes G$, also called the action groupoid,  is defined as follows. As a set, \[ H\rtimes G=H\ast G=\{(h,g)\in H\times G \mid t(g)=p(h)\}\] and  the multiplication is given by
\[(h,g)(h'\cdot g,g')=(hh', gg'),\]
when $t(g')=d(g)$ and $d(h)=t(h')$. 

\end{dfn}
In a semi-direct product, the inverse is given by \[(h,g)^{-1}=(h^{-1}\cdot g, g^{-1})\]
and we get
\[(h,g)^{-1}(h,g)=(h^{-1}\cdot g, g^{-1})(h,g)=((h^{-1}\cdot g)(h\cdot g), d(g))=(d(h)\cdot g, d(g)),\]
\[(h,g)(h,g)^{-1}=(h,g)(h^{-1}\cdot g, g^{-1})=(t(h),t(g)).\] 
Since $d(g)=p(d(h)\cdot g)$ and $ t(g)=p( t(h))$, the unit space of $H\rtimes G$ can be identified with $H^{(0)}$ and then we make identifications 
\[d(h,g)\equiv d(h)\cdot g,\;\; t(g,h)\equiv t(h).\]  There is a groupoid homomorphism \[\pi: H\rtimes G\to G,\; \pi(h,g)=g\] with kernel $\pi^{-1}(G^{(0)})=\{(h,p(h))\;\mid\; h\in H\}$ isomorphic to $H$.

\begin{rmk}
The notion of groupoid action on another groupoid   includes the action of a groupoid on a space and the action of a group on another group by automorphisms. A particular situation is when $G_1, G_2$ are groupoids and $Z$ is a $(G_1,G_2)$-space, i.e. $Z$ is a left $G_1$-space, a right $G_2$-space and the actions commute. Then 
$G_1\ltimes Z$ is a right $G_2$-groupoid and $Z\rtimes G_2$ is a left $G_1$-groupoid.
\end{rmk}

\begin{dfn}
Suppose now that $G,\Gamma$ are \'etale groupoids and that $\rho:G\to \Gamma$ is a groupoid homomorphism, also called a cocycle. The skew product groupoid $G\times_{\rho}\Gamma$ is defined as the set of pairs $(g,\gamma)\in G\times \Gamma$ such that $(\gamma,\rho(g))\in \Gamma^{(2)}$ with multiplication
\[(g,\gamma)(g',\gamma\rho(g))=(gg',\gamma)\;\text{if}\;(g,g')\in G^{(2)}\]
and inverse
\[(g,\gamma)^{-1}=(g^{-1},\gamma\rho(g)).\]
\end{dfn}
In a skew product we have $d(g,\gamma)=(d(g),\gamma\rho(g))$ and $t(g,\gamma)=(t(g),\gamma)$. Its unit space is \[G^{(0)}\ast \Gamma=\{(u,\gamma)\in G^{(0)}\times \Gamma: \rho(u)=d(\gamma)\}.\]

In particular, if $G,H$ are \'etale groupoids and $G$ acts on $H$ on the right, for the groupoid homomorphism $\pi: H\rtimes G\to G,\; \pi(h,g)=g$  we can form the skew product $(H\rtimes G)\times_\pi G$ made of triples $(h,g,g')\in H\times G\times G$ such that $p(h)=t(g)$ and $(g',g)\in G^{(2)}$,  with unit space $H^{(0)}\ast G$ and operations 
\[(h,g,g')(h', g'',g'g)=(h(h'\cdot g^{-1}),gg'', g') ,\]
\[(h,g,g')^{-1}=(h^{-1}\cdot g,g^{-1}, g'g).\]
\begin{rmk}
Given a groupoid homomorphism $\rho:G\to \Gamma$,   there is a left action $\hat{\rho}$ of $\Gamma$ on the skew product  $G\times_{\rho}\Gamma$ given by \[\gamma'\cdot(g,\gamma)=(g,\gamma'\gamma).\] 
\end{rmk}
\begin{proof}
We check all the properties defining a groupoid action. First, we define the continuous open map \[p:G\times_{\rho}\Gamma\to \Gamma^{(0)},\; p(g,\gamma)=t(\gamma)\] and note that $d(\gamma')=t(\gamma)=p(g,\gamma)$.  Now $p(g,\gamma'\gamma)=t(\gamma')$ and if $(\gamma_1, \gamma_2)\in \Gamma^{(2)}$, then \[(\gamma_1\gamma_2)\cdot(g,\gamma)=(g, \gamma_1\gamma_2\gamma)=\gamma_1\cdot(\gamma_2\cdot(g,\gamma)).\] Also, if $(g,\gamma), (g', \gamma\rho(g))\in G\times_\rho\Gamma$ are composable with product $(gg', \gamma)$, then
\[\gamma'\cdot(gg',\gamma)=(gg', \gamma'\gamma)=(g, \gamma'\gamma)(g', \gamma'\gamma\rho(g))=(\gamma'\cdot(g,\gamma))(\gamma'\cdot(g', \gamma\rho(g))).\]
The last condition to check is $t(\gamma)\cdot(g,\gamma)=(g,\gamma)$, which is obvious.
\end{proof}
We define  a right action of $\Gamma$  on $G\times_{\rho}\Gamma$ by $(g,\gamma)\cdot \gamma'=(g,\gamma'^{-1}\gamma)$, and we form the semi-direct product $(G\times_\rho \Gamma)\rtimes\;\Gamma$ made of triples $(g,\gamma,\gamma')\in G\times \Gamma\times \Gamma$ such that $(\gamma, \rho(g))\in \Gamma^{(2)}$ and $t(\gamma')=p(g,\gamma)=t(\gamma)$, with unit space $G^{(0)}\ast\Gamma$ and operations
\[(g,\gamma,\gamma')(g', \gamma'^{-1}\gamma\rho(g),\gamma'')=(gg', \gamma,\gamma'\gamma'') ,\]
\[(g, \gamma, \gamma')^{-1}=((g^{-1},\gamma\rho(g))\cdot \gamma', \gamma'^{-1} )=(g^{-1}, \gamma'^{-1}\gamma\rho(g),\gamma'^{-1}).\] 

For the next result, which resembles Takai duality, see  Definition 1.3 in \cite{Re}, Proposition 3.7 in \cite{M}  and Definition 3.1 in \cite{FKPS}.

\begin{thm}
Let $G, H, \Gamma$ be \'etale groupoids such that $G$ acts on $H$ and such that $\rho:G\to \Gamma$ is a groupoid homomorphism. Then, using  the above notation, $(H\rtimes G)\times_\pi G$ is similar to $H$ and $(G\times_\rho \Gamma)\rtimes \Gamma$ is similar to $G$. 
\end{thm}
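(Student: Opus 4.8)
The plan is to work directly with Renault's notion of similarity (Definition 1.3 in \cite{Re}, as used in \cite{FKPS}): two \'etale groupoids $G_1,G_2$ are \emph{similar} when there exist continuous homomorphisms $\phi\colon G_1\to G_2$ and $\psi\colon G_2\to G_1$ whose composites $\psi\circ\phi$ and $\phi\circ\psi$ are each similar to the respective identity, where two homomorphisms $f_0,f_1\colon G_1\to G_2$ are similar if there is a continuous map $\theta\colon G_1^{(0)}\to G_2$ with $d(\theta(x))=f_0(x)$, $t(\theta(x))=f_1(x)$ and $f_1(g)=\theta(t(g))\,f_0(g)\,\theta(d(g))^{-1}$ for all $g$. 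Accordingly, for each of the two assertions I would exhibit an explicit projection homomorphism, an explicit section homomorphism (a one-sided inverse), and an explicit conjugator $\theta$ witnessing that the remaining composite is similar to the identity.

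For the second similarity, write $K=(G\times_\rho\Gamma)\rtimes\Gamma$ with elements $(g,\gamma,\gamma')$. I would take $\psi\colon K\to G$, $\psi(g,\gamma,\gamma')=g$, which is a homomorphism because the product of two composable triples has first coordinate $gg'$. For the section I would set $\phi\colon G\to K$, $\phi(g)=(g,\rho(t(g)),\rho(g))$, viewing $\rho(t(g))\in\Gamma^{(0)}$ as a unit; a short check of the skew-product composability condition shows $\phi$ is a homomorphism, and clearly $\psi\circ\phi=\mathrm{id}_G$. It then remains to see $\phi\circ\psi\sim\mathrm{id}_K$: one computes $\phi\psi(g,\gamma,\gamma')=(g,\rho(t(g)),\rho(g))$, and defines $\theta(u,\gamma)=(u,\rho(u),\gamma^{-1})$ on the unit space $G^{(0)}\ast\Gamma$. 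Using $d(g,\gamma,\gamma')=(d(g),\gamma'^{-1}\gamma\rho(g))$ and $t(g,\gamma,\gamma')=(t(g),\gamma)$, a direct calculation gives $\theta(t(k))\,k\,\theta(d(k))^{-1}=\phi\psi(k)$, so the $\rho(g)$-twist cancels and similarity follows.

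For the first similarity, write $K'=(H\rtimes G)\times_\pi G$ with elements $(h,g,g')$. The key observation is that the correct projection to $H$ must undo the $G$-decoration: I would take $\Psi(h,g,g')=h\cdot g'^{-1}$, which is defined since $t(g'^{-1})=d(g')=t(g)=p(h)$. That $\Psi$ is a homomorphism follows from the action axioms, precisely from $(h_1h_2)\cdot g=(h_1\cdot g)(h_2\cdot g)$ and $h\cdot(g_1g_2)=(h\cdot g_1)\cdot g_2$ applied to the twisted product $(h(h'\cdot g^{-1}),gg'',g')$. For the section I would take $\Phi(h)=(h,p(h),p(h))$, so that $\Psi\circ\Phi=\mathrm{id}_H$ via $h\cdot p(h)=h$; here $\Phi\Psi(h,g,g')=(h\cdot g'^{-1},t(g'),t(g'))$, and I would use $\theta(x,g)=(x\cdot g^{-1},g,t(g))$ on the unit space $H^{(0)}\ast G$, verifying as above that conjugation by $\theta$ turns the identity into $\Phi\Psi$.

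Throughout, I would also record that all four maps in each case are continuous and open, so that they are genuine morphisms of \'etale groupoids, and that the composability and unit-space constraints ($p(h)=t(g)$, $(\gamma,\rho(g))\in\Gamma^{(2)}$, etc.) hold at every step. The main obstacle is not a single deep idea but getting the section $\phi$ (resp.\ $\Phi$) and the conjugator $\theta$ to match simultaneously, so that the $\rho(g)$-twist of the skew product and the $h'\cdot g^{-1}$-twist coming from the action cancel exactly; the bulk of the delicate bookkeeping lives in verifying the identity $f_1(k)=\theta(t(k))\,f_0(k)\,\theta(d(k))^{-1}$ in each of the two twisted multiplications.
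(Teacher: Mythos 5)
Your proposal is correct and takes essentially the same route as the paper: the same projection and section homomorphisms ($\phi(h,g,g')=h\cdot g'^{-1}$, $\psi(h)=(h,p(h),p(h))$ in the first case, and $(g,\gamma,\gamma')\mapsto g$, $g\mapsto(g,\rho(t(g)),\rho(g))$ in the second), together with the same conjugating maps $\theta$, verified by the same direct computations with the twisted multiplications. The only difference is cosmetic and in your favor: in the first similarity you take the third coordinate of $\theta(x,g)$ to be $t(g)$ rather than the paper's $p(x)$, which is the form that literally satisfies the composability requirement for triples in $(H\rtimes G)\times_\pi G$ and is what the paper's own subsequent computations implicitly use.
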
	
\begin{proof}
Recall  that two (continuous) groupoid homomorphisms $\phi_1,\phi_2:G_1\to G_2$ are similar if there is  a continuous function $\theta:G_1^{(0)}\to G_2$ such that 
\[\theta(t(g))\phi_1(g)=\phi_2(g)\theta(d(g))\]
for all $g\in G_1$. Two topological groupoids $G_1, G_2$ are similar if there exist continuous homomorphisms $\phi:G_1\to G_2$ and $\psi:G_2\to G_1$ such that $\psi\circ \phi$ is similar to $id_{G_1}$  and $\phi\circ \psi$ is similar to $id_{G_2}$.

 To show that $(H\rtimes G)\times_\pi G$ is similar to $H$, we define 
 \[\phi: (H\rtimes G)\times_\pi G\to H,\; \phi(h,g,g')= h\cdot g'^{-1}, \]
 \[\psi:H\to (H\rtimes G)\times_\pi G,\; \psi(h)=(h,p(h),p(h))\]
 and 
 \[\theta:H^{(0)}\ast G\to (H\rtimes G)\times_\pi G,\;\theta(u,g)=(u\cdot g^{-1}, g, p(u)).\] 
 We  check that \[(\phi\circ\psi)(h)=\phi(h,p(h),p(h))=h\cdot p(h)^{-1}=h\] and  that \[(*)\;\; \theta[t(h,g,g')](h,g,g')=(\psi\circ\phi)(h,g,g')\theta[d(h,g,g')].\]
 We have
 
 \[t(h,g,g')=(h,g,g')(h,g,g')^{-1}=(h,g,g')(h^{-1}\cdot g, g^{-1},g'g)=\]
 \[=(h(h^{-1}\cdot g\cdot g^{-1}),gg^{-1},g')=(t(h),t(g),g')\equiv (t(h),g'),\]
 and 
 \[\theta(t(h),g')=(t(h)\cdot g'^{-1}, g',p(t(h)).\]
 The left-hand side of $(*)$ becomes
 \[(t(h)\cdot g'^{-1}, g',p(t(h))(h,g,g')=(h\cdot g'^{-1},g'g, p(t(h))=(h\cdot g'^{-1},g'g, t(g')).\]
 Now
     \[(\psi\circ\phi)(h,g,g')=\psi(h\cdot g'^{-1})=(h\cdot g'^{-1}, p(h\cdot g'^{-1}), p(h\cdot g'^{-1})=(h\cdot g'^{-1},t(g'),t(g')),\]
 \[d(h,g,g')=(h,g,g')^{-1}(h,g,g')=(h^{-1}\cdot g,g^{-1}, g'g)(h,g,g')=\]\[=((h^{-1}\cdot g)(h\cdot g),g^{-1}g,g'g)=(d(h)\cdot g, d(g),g'g)\equiv (d(h)\cdot g,g'g),\]
 and
 \[\theta(d(h)\cdot g,g'g)=(d(h)\cdot g\cdot (g'g)^{-1}, g'g),p(d(h)\cdot g) )=(d(h)\cdot g'^{-1},g'g), d(g)).\]
The right-hand side becomes
 \[(h\cdot g'^{-1},t(g'),t(g'))(d(h)\cdot g'^{-1},g'g,d(g))=(h\cdot g'^{-1}, g'g, t(g')),\]
so $(*)$ is verified.

To show that $(G\times_\rho \Gamma)\rtimes \Gamma$ is similar to $G$, we define
\[\phi: (G\times_\rho \Gamma)\rtimes\Gamma\to G, \; \phi(g,\gamma,\gamma')=g,\]
\[\psi:G\to (G\times_\rho \Gamma)\rtimes\Gamma,\;  \psi(g)=(g, \rho(t(g)),\rho(g) )\]
and 
\[\theta: G^{(0)}\ast \Gamma\to (G\times_\rho \Gamma)\rtimes\Gamma, \theta(u,\gamma)=(u, \rho(u),\gamma^{-1}).\]
We have
\[(\phi\circ \psi)(g)=\phi(g,\rho(t(g)),\rho(g))=g\]
and we need to verify that
\[(**)\;\; \theta(t(g,\gamma,\gamma')) (g,\gamma,\gamma')=(\psi\circ \phi)(g,\gamma,\gamma')\theta(d(g,\gamma,\gamma')).\]
We compute
\[t(g,\gamma, \gamma')=(g,\gamma,\gamma')(g,\gamma,\gamma')^{-1}=(g,\gamma,\gamma')(g^{-1}, \gamma'^{-1}\gamma\rho(g), \gamma'^{-1})=\]
\[=(t(g),\gamma,t(\gamma'))\equiv (t(g),\gamma)\]
and
\[\theta(t(g),\gamma)=(t(g),\rho(t(g),\gamma^{-1}),\]
so the left-hand side of $(**)$  becomes
\[(t(g),\rho(t(g)),\gamma^{-1})(g,\gamma,\gamma')=(g,\rho(t(g)),\gamma^{-1}\gamma').\]
Also
\[(\psi\circ\phi)(g,\gamma,\gamma')=(g,\rho(t(g)),\rho(g)),\]
\[d(g,\gamma,\gamma')=(g,\gamma,\gamma')^{-1}(g,\gamma,\gamma')=(g^{-1}, \gamma'^{-1}\gamma\rho(g), \gamma'^{-1})(g,\gamma,\gamma')=\]
\[=(d(g),\gamma'^{-1}\gamma\rho(g),d(\gamma'))\equiv (d(g), \gamma'^{-1}\gamma\rho(g)),\]
\[\theta(d(g), \gamma'^{-1}\gamma\rho(g))=(d(g),\rho(d(g), \rho(g)^{-1}\gamma^{-1}\gamma'),\]
and the right-hand side is
\[(g,\rho(t(g)),\rho(g))(d(g),\rho(d(g)), \rho(g)^{-1}\gamma^{-1}\gamma')=(g,\rho(t(g)),\gamma^{-1}\gamma'),\]
so $(**)$ is verified.

\end{proof}
For skew products and semi-direct products of groupoids, there are Lyndon--Hochschild--Serre spectral sequences for the  computation of their homology, see \cite{CM} and \cite{M}. 

\begin{thm}\label{seq} Let $G, \Gamma$ be  \'etale groupoids.

(1) Suppose that $\rho : G \to \Gamma$ is a groupoid homomorphism. Then there exists a spectral sequence 
\[E_{p,q}^2 =H_p(\Gamma,H_q(G\times_{\rho}\Gamma))\Rightarrow H_{p+q}(G),\]
where $H_q (G \times_{\rho}\Gamma)$ is regarded as a $\Gamma$-module via the action $\hat{\rho} : \Gamma\curvearrowright G\times_{\rho}\Gamma$.

\bigskip

(2) Suppose that $\varphi:\Gamma \curvearrowright G$ is a groupoid action. Then there exists a spectral sequence 
\[E_{p,q}^2 =H_p(\Gamma,H_q(G))\Rightarrow H_{p+q}(\Gamma\ltimes G),\]
where $H_q(G)$ is regarded as a $\Gamma$-module via the action $\varphi$.
\end{thm}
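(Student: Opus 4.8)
The two parts are not independent: part (2) is the fundamental Lyndon--Hochschild--Serre spectral sequence, and part (1) will be deduced from it by means of the similarity established in the preceding Theorem. The plan for (2) is to realize the homology of $\Gamma\ltimes G$ as the total homology of a first-quadrant double complex, following the construction of Crainic--Moerdijk \cite{CM} and Matui \cite{M}. The projection $\pi:\Gamma\ltimes G\to\Gamma$ is a groupoid homomorphism whose fibre is $G$, so that $\Gamma\ltimes G$ sits in an extension $G\to\Gamma\ltimes G\to\Gamma$; the nerve of $\Gamma\ltimes G$ then fibres over the nerve of $\Gamma$ and is naturally a bisimplicial space, the $G$-direction being twisted by the action $\varphi$.

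Concretely I would set $E^0_{p,q}=C_c(\Gamma^{(p)}\ast G^{(q)},A)$, where the fibred product records the compatibility of the $\Gamma$- and $G$-strings through $\varphi$, and equip it with the two anticommuting differentials obtained from the alternating sums of the face maps in each direction, together with the $\pi_*$ pushforwards coming from the \'etale structure. Taking homology first in the $G$-direction yields $H_q(G)$, and the key point is to check that the residual horizontal action makes $H_q(G)$ into the $\Gamma$-module determined by $\varphi$; the horizontal homology of $C_c(\Gamma^{(\ast)},H_q(G))$ is then by definition $H_p(\Gamma,H_q(G))$, giving the $E^2$-page. Since the double complex is concentrated in the first quadrant both filtrations converge, and the total complex computes $H_{p+q}(\Gamma\ltimes G)$, which is the asserted abutment.

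To obtain (1), I would apply (2) to the left action $\hat{\rho}:\Gamma\curvearrowright G\times_\rho\Gamma$ of the Remark above, producing $E^2_{p,q}=H_p(\Gamma,H_q(G\times_\rho\Gamma))\Rightarrow H_{p+q}(\Gamma\ltimes(G\times_\rho\Gamma))$. The left action $\hat{\rho}$ corresponds, under $\gamma'\cdot h=h\cdot\gamma'^{-1}$, to the right action $(g,\gamma)\cdot\gamma'=(g,\gamma'^{-1}\gamma)$ of the preceding Theorem, so that $\Gamma\ltimes(G\times_\rho\Gamma)=(G\times_\rho\Gamma)\rtimes\Gamma$; by that Theorem this groupoid is similar to $G$, and since similar \'etale groupoids have isomorphic homology (see \cite{CM}, \cite{M}), the abutment is $H_{p+q}(G)$. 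This is exactly the spectral sequence of (1).

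The routine verifications — the face-map identities, $\delta\circ\delta=0$ in each direction, and the compatibility of $\pi_*$ with the differentials — are precisely those underlying the homology of Section~2. The \emph{main obstacle} is the identification of the $E^2$-page in (2): one must show that the nerve of $\Gamma\ltimes G$ genuinely decomposes as the claimed bisimplicial space and, above all, that the $\Gamma$-module structure induced on $H_q(G)$ after collapsing the $G$-direction is the one coming from $\varphi$ rather than a twist of it. A secondary point needing care is the invariance of groupoid homology under similarity, which is what lets the abutment in (1) be rewritten as $H_{p+q}(G)$; this should follow from the fact that similar homomorphisms are simplicially homotopic and hence induce the same map on $H_*$.
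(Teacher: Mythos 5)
You should know at the outset that the paper offers no proof of this theorem at all: it is imported from the literature, with the sentence immediately preceding it pointing to \cite{CM} and \cite{M}, so there is nothing to compare line by line. That said, your reconstruction follows exactly the route the paper's layout presupposes, so your architecture is the intended one. Part (2) is the fundamental statement; in \cite{CM} it arises as a Leray-type spectral sequence for the projection $\pi:\Gamma\ltimes G\to\Gamma$, and \cite{M} proves it when $\Gamma$ is a group. Part (1) is then obtained by applying (2) to $\hat{\rho}:\Gamma\curvearrowright G\times_{\rho}\Gamma$, identifying $\Gamma\ltimes(G\times_{\rho}\Gamma)$ with $(G\times_{\rho}\Gamma)\rtimes\Gamma$, and replacing the abutment $H_{p+q}((G\times_{\rho}\Gamma)\rtimes\Gamma)$ by $H_{p+q}(G)$ via the similarity theorem proved immediately before the statement --- enabling this deduction is precisely why that theorem is in the paper, and your reduction of (1) to (2) along these lines is complete and correct. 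Two caveats. First, your argument for (2) is an outline rather than a proof: the bisimplicial decomposition of the nerve, the Eilenberg--Zilber-type identification of the total homology with $H_{*}(\Gamma\ltimes G)$, and the verification that the $\Gamma$-module structure induced on $H_q(G)$ is the one given by $\varphi$ (which you rightly flag as the main obstacle) constitute essentially all of the work, and they are what \cite{CM} actually carries out; note also that homology with coefficients in a groupoid module, $H_p(\Gamma,M)$, which is needed even to state the $E^2$-page, is never defined in the paper and would have to be supplied. Second, the last step of (1) needs invariance of homology under similarity of \'etale groupoids; your simplicial-homotopy argument is the right idea, but making such homotopies act on compactly supported chains requires the relevant maps to behave like local homeomorphisms, so it is safer to quote Lemma 4.3 of \cite{FKPS} (for ample Hausdorff groupoids), which is the form in which the paper itself invokes this invariance later on.
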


Note that for $G$ an ample Hausdorff groupoid and $\rho:G\to\ZZ$ a cocycle,  we have the following long exact sequence 
involving the homology of $G$ and the homology of $G\times_\rho\ZZ$, \[0\longleftarrow H_0(G)\longleftarrow H_0(G\times_\rho\ZZ)\stackrel{id-\rho_*}{\longleftarrow}H_0(G\times_\rho\ZZ)\longleftarrow H_1(G)\longleftarrow\cdots\]
\[\cdots\longleftarrow H_n(G)\longleftarrow H_n(G\times_\rho\ZZ)\stackrel{id-\rho_*}{\longleftarrow}H_n(G\times_\rho\ZZ)\longleftarrow H_{n+1}(G)\longleftarrow\cdots\]
Here $\rho_*$ is the map induced by the action $\hat{\rho}: \ZZ\curvearrowright G\times_{\rho}\ZZ$, see Lemma 1.4 in \cite{O}.
\bigskip

\section{Self-similar groupoid actions and their $C^*$-algebra}

\bigskip

We  recall some facts about self-similar groupoid actions and their Cuntz-Pimsner algebras  from \cite{LRRW}.
Let $E = (E^0, E^1, r, s)$ be a finite directed graph  with no sources. For $k \ge 2$, define  the set of paths of length $k$ in $E$ as
\[E^k = \{e_1e_2\cdots e_k : e_i \in E^1,\; r(e_{i+1}) = s(e_i)\}.\]
The maps $r,s$ are naturally extended to $E^k$ by taking
\[r(e_1e_2\cdots e_k)=r(e_1),\;\; s(e_1e_2\cdots e_k)=s(e_k).\]
We denote by $E^* :=\bigcup_{ k\ge 0} E^k$  the space of finite paths (including vertices) and  by $E^\infty$ the infinite path space of $E$ with the usual topology given by the cylinder sets $Z(\alpha)=\{\alpha\xi:\xi\in E^\infty\}$ for $\alpha\in E^*$. 
 
 We can visualize the set  $E^*$ as indexing the vertices of a union of rooted trees  or forest $T_E$   given by $T_E^0 =E^*$ and with edges \[T_E^1 =\{(\mu, \mu e) :\mu\in E^*, e\in E^1\; \text{and}\; s(\mu)=r(e)\}.\]
 
 \begin{example}\label{ex}
 
 For the graph
 \[\begin{tikzpicture}[shorten >=0.4pt,>=stealth, semithick]
\renewcommand{\ss}{\scriptstyle}
\node[inner sep=1.0pt, circle, fill=black]  (u) at (-2,0) {};
\node[below] at (u.south)  {$\ss u$};
\node[inner sep=1.0pt, circle, fill=black]  (v) at (0,0) {};
\node[below] at (v.south)  {$\ss v$};
\node[inner sep=1.0pt, circle, fill=black]  (w) at (2,0) {};
\node[below] at (w.south)  {$\ss w$};

\draw[->, blue] (u) to [out=45, in=135]  (v);
\node at (-1,0.7){$\ss e_2$};
\draw[->, blue] (v) to [out=-135, in=-45]  (u);
\node at (-1,-0.7) {$\ss e_3$};
\draw[->, blue] (v) to [out=45, in=135]  (w);
\node at (1,0.7){$\ss e_4$};
\draw[->, blue] (v) to (w);
\node at (1,0.1){$\ss e_5$};
\draw[->, blue] (w) to [out=-135, in=-45]  (v);
\node at (1,-0.7) {$\ss e_6$};

\draw[->, blue] (u) .. controls (-3.5,1.5) and (-3.5, -1.5) .. (u);
\node at (-3.35,0) {$\ss e_1$};

\end{tikzpicture}
\]

the forest $T_E$ looks like

\[
\begin{tikzpicture}[shorten >=0.4pt,>=stealth, semithick]
\renewcommand{\ss}{\scriptstyle}
\node[inner sep=1.0pt, circle, fill=black]  (u) at (-4,4) {};
\node[above] at (u.north)  {$\ss u$};
\node[inner sep=1.0pt, circle, fill=black]  (v) at (0,4) {};
\node[above] at (v.north)  {$\ss v$};
\node[inner sep=1.0pt, circle, fill=black]  (w) at (4,4) {};
\node[above] at (w.north)  {$\ss w$};

\node[inner sep=1.0pt, circle, fill=black] (e1) at (-5,3){};
\node[left] at (-5,3)  {$\ss e_1$};
\node[inner sep=1.0pt, circle, fill=black] (e3) at (-3,3){};
\node[right] at (-3,3)  {$\ss e_3$};
\draw[->, blue] (e1) to  (u);
\draw[-> , blue] (e3) to  (u);
\node[inner sep=1.0pt, circle, fill=black] (e11) at (-5.4,2){};
\node[inner sep=1.0pt, circle, fill=black] (e13) at (-4.6,2){};
\node[below] at (-5.4,2)  {$\ss e_1e_1$};
\node[below] at (-5.4,2) {$\vdots$};
\draw[-> , blue] (e11) to   (e1);
\node[below] at (-4.6,2)  {$\ss e_1e_3$};
\node[below] at (-4.6,2) {$\vdots$};
\draw[-> , blue] (e13) to  (e1);
\node[inner sep=1.0pt, circle, fill=black] (e32) at (-3.4,2){};
\node[inner sep=1.0pt, circle, fill=black] (e36) at (-2.6,2){};
\node[below] at (-3.4,2) {$\ss e_3e_2$};
\node[below] at (-3.4,2) {$\vdots$};
\node[below] at (-2.6,2) {$\ss e_3e_6$};
\node[below] at (-2.6,2) {$\vdots$};
\draw[-> , blue] (e32) to   (e3);
\draw[-> , blue] (e36) to   (e3);

\node[inner sep=1.0pt, circle, fill=black] (e2) at (-1,3){};
\node[left] at (-1,3)  {$\ss e_2$};

\node[inner sep=1.0pt, circle, fill=black] (e6) at (1,3){};
\node[right] at (1,3)  {$\ss e_6$};
\draw[-> , blue] (e2) to  (v);
\draw[-> , blue] (e6) to  (v);
\node[inner sep=1.0pt, circle, fill=black] (e21) at (-1.4,2){};
\node[inner sep=1.0pt, circle, fill=black] (e23) at (-0.6,2){};

\draw[-> , blue] (e21) to   (e2);

\draw[-> , blue] (e23) to  (e2);
\node[inner sep=1.0pt, circle, fill=black] (e64) at (0.6,2){};
\node[inner sep=1.0pt, circle, fill=black] (e65) at (1.4,2){};
\draw[-> , blue] (e64) to   (e6);
\draw[-> , blue] (e65) to   (e6);
\node[below] at (-1.4,2)  {$\ss e_2e_1$};
\node[below] at (-1.4,2) {$\vdots$};
\node[below] at (-0.6,2)  {$\ss e_2e_3$};
\node[below] at (-0.6,2) {$\vdots$};
\node[below] at (0.6,2) {$\ss e_6e_4$};
\node[below] at (0.6,2) {$\vdots$};
\node[below] at (1.4,2) {$\ss e_6e_5$};
\node[below] at (1.4,2) {$\vdots$};

\node[inner sep=1.0pt, circle, fill=black] (e4) at (3,3){};
\node[left] at (3,3)  {$\ss e_4$};

\node[inner sep=1.0pt, circle, fill=black] (e5) at (5,3){};
\node[right] at (5,3)  {$\ss e_5$};
\draw[-> , blue] (e4) to  (w);
\draw[-> , blue] (e5) to  (w);
\node[inner sep=1.0pt, circle, fill=black] (e42) at (2.6,2){};
\node[inner sep=1.0pt, circle, fill=black] (e46) at (3.4,2){};

\draw[-> , blue] (e42) to   (e4);

\draw[-> , blue] (e46) to  (e4);
\node[inner sep=1.0pt, circle, fill=black] (e52) at (4.6,2){};
\node[inner sep=1.0pt, circle, fill=black] (e56) at (5.4,2){};
\draw[-> , blue] (e52) to   (e5);
\draw[-> , blue] (e56) to   (e5);
\node[below] at (2.6,2)  {$\ss e_4e_2$};
\node[below] at (2.6,2) {$\vdots$};
\node[below] at (3.4,2)  {$\ss e_4e_6$};
\node[below] at (3.4,2) {$\vdots$};
\node[below] at (4.6,2) {$\ss e_5e_2$};
\node[below] at (4.6,2) {$\vdots$};
\node[below] at (5.4,2) {$\ss e_5e_6$};
\node[below] at (5.4,2) {$\vdots$};

\end{tikzpicture}
\]

 \end{example}

Recall that a partial isomorphism of the forest $T_E$ corresponding to a given directed graph $E$  consists of a pair $(v, w) \in E^0 \times E^0$ and a bijection $g : vE^* \to wE^*$ such that
\begin{itemize}

\item $g|_{vE^k} : vE^k \to wE^k$ is bijective for all $k\ge 1$.

\item $g(\mu e)\in g(\mu)E^1$ for $\mu\in vE^*$  and $e\in E^1$ with $r(e)=s(\mu)$.
 
 \end{itemize}
 
The set of partial isomorphisms of $T_E$ forms a groupoid PIso$(T_E)$ with unit space $E^0$. The identity morphisms are  $id_v : vE^* \to vE^*$, the inverse of $g : vE^* \to wE^*$ is $g^{-1} : wE^* \to vE^*$, and the  multiplication is composition. We often identify $v\in E^0$ with $id_v\in$ PIso$(T_E)$.

\begin{dfn}\label{ss} 
Let $E$ be a finite directed graph with no sources, and let $G$ be a groupoid with unit space $E^0$. A {\em self-similar action} $(G,E)$ on the path space of $E$  is given by a faithful groupoid homomorphism $G\to$ PIso$(T_E)$ such that for every $g\in G$ and every $e\in d(g)E^1$ there exists a unique $h\in G$ denoted  by $g|_e$ and called the restriction of $g$ to $e$ such that
\[g\cdot(e\mu)=(g\cdot e)(h\cdot \mu)\;\;\text{for all}\;\; \mu\in s(e)E^*.\]
\end{dfn}
\begin{rmk}
It is possible that $g|_e=g$ for all $e\in d(g)E^1$, in which case \[g\cdot(e_1e_2\cdots e_n)=(g\cdot e_1)\cdots(g\cdot e_n).\]
We have \[d(g|_e)=s(e),\; t(g|_e)=s(g\cdot e)=g|_e\cdot s(e),\;  r(g\cdot e)=g\cdot r(e).\] In particular, the source map may not be equivariant as  in \cite{EP}. It is shown in Appendix A of \cite{LRRW} that a self-similar group action $(G,E)$ as in \cite{EP} determines a self-similar groupoid action $(E^0\rtimes G, E)$ as in Definition \ref{ss}, where $E^0\rtimes G$ is the semi-direct product or the action groupoid of the group $G$ acting on $E^0$. Note that not any self-similar groupoid action  comes from a self-similar group action, as seen in our example below.
\end{rmk}

\begin{prop} A self-similar groupoid action $(G,E)$ as above extends to an action of $G$ on the path space $E^*$ and  determines an action of $G$ on the graph $T_E$, in the sense that $G$ acts on both the vertex space $T_E^0$ and the edge space $T_E^1$ and intertwines the range and the source maps of $T_E$, see Definition 4.1 in \cite{De}.
\end{prop}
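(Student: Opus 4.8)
The plan is to derive both assertions from the single defining identity $g\cdot(e\mu)=(g\cdot e)(g|_e\cdot\mu)$ of Definition \ref{ss}, together with the bookkeeping relations $d(g|_e)=s(e)$, $t(g|_e)=s(g\cdot e)$ and $r(g\cdot e)=g\cdot r(e)$ recorded in the preceding Remark. For the first assertion I would observe that the faithful homomorphism $G\to$ PIso$(T_E)$ already assigns to each $g\in G$ a bijection $g:d(g)E^*\to t(g)E^*$, and that this is a groupoid action on the space $E^*$ in the sense of Definition \ref{ga}. The moment map is the range map $p:=r:E^*\to E^0=G^{(0)}$, so that $g\cdot\mu$ is defined precisely when $d(g)=r(\mu)$ and then $r(g\cdot\mu)=t(g)$; associativity $(g_1g_2)\cdot\mu=g_1\cdot(g_2\cdot\mu)$ is the homomorphism property, and $r(\mu)\cdot\mu=\mu$ holds because each $id_v$ maps to the identity bijection of $vE^*$.

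The key lemma I would establish next is the iterated self-similarity formula. Defining the restriction along a path $\mu=e_1\cdots e_k$ inductively by $g|_\mu:=(\cdots(g|_{e_1})|_{e_2}\cdots)|_{e_k}$, I claim that
\[g\cdot(\mu\nu)=(g\cdot\mu)(g|_\mu\cdot\nu)\qquad\text{for all }\nu\in s(\mu)E^*,\]
which follows by a straightforward induction on $|\mu|$, the base case being Definition \ref{ss}. The same bookkeeping then yields the chain rule $(g_1g_2)|_\mu=g_1|_{g_2\cdot\mu}\,g_2|_\mu$ and, by appending the source vertex $s(\mu)$ to $\mu$, the path-level identity $s(g\cdot\mu)=t(g|_\mu)$.

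With these in hand the action on the edge space $T_E^1=\{(\mu,\mu e)\}$ is defined by $g\cdot(\mu,\mu e):=(g\cdot\mu,\,g\cdot(\mu e))$ for $g$ with $d(g)=r(\mu)$. The essential point is that this value again lies in $T_E^1$: by the iterated formula $g\cdot(\mu e)=(g\cdot\mu)(g|_\mu\cdot e)$, so $g\cdot(\mu e)$ is the one-edge extension of $g\cdot\mu$ by the genuine graph edge $g|_\mu\cdot e\in E^1$, whose range $r(g|_\mu\cdot e)=g|_\mu\cdot r(e)=t(g|_\mu)=s(g\cdot\mu)$ is exactly the source of $g\cdot\mu$. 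Taking the range and source maps of $T_E$ to send an edge $(\mu,\mu e)$ to its two endpoints $\mu$ and $\mu e$, equivariance is then immediate from the two coordinates, the groupoid-action axioms on edges reduce coordinatewise to those already verified on $E^*$, and bijectivity of $g$ on edges follows by applying the same construction to $g^{-1}$.

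The hard part will be the well-definedness of the edge action, i.e.\ checking that $g$ respects the parent-child relation of the tree. On the graph $E$ itself the source map need not be equivariant, as noted in the Remark, so a priori $g$ could scramble the tree structure; this is rescued precisely by the self-similar identity, which forces $g\cdot(\mu e)$ to extend $g\cdot\mu$ by a single edge. Confirming that $g|_\mu\cdot e$ has the correct range is the one spot where the relations $t(g|_e)=s(g\cdot e)$ and $r(g\cdot e)=g\cdot r(e)$ must be combined with care; once this is secured, the identification with Definition 4.1 of \cite{De} is routine.
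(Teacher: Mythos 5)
Your construction coincides with the paper's: fiber $T_E^0=E^*$ and $T_E^1$ over $E^0=G^{(0)}$ via the range map, let $g$ act on edges coordinatewise by $g\cdot(\mu,\mu e)=(g\cdot\mu,\,g\cdot(\mu e))$, and read off the intertwining of the range and source maps of $T_E$ directly from the two coordinates. The genuine difference lies in what you single out as the hard part. The containment $g\cdot(\mu e)\in(g\cdot\mu)E^1$, i.e.\ that $g$ respects the parent--child relation of the forest, does not need to be rescued by the self-similar identity: it is literally the second defining condition of a partial isomorphism of $T_E$, and the fact that $g$ acts as a length-preserving bijection $d(g)E^*\to t(g)E^*$ is likewise part of the data of the faithful homomorphism $G\to\mathrm{PIso}(T_E)$ with which Definition \ref{ss} begins. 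This is why the paper's proof can afford to be short: well-definedness is built into the hypotheses, and only the compatibility of the two fibrations and the equivariance of $r$ and $s$ on $T_E$ remain to be checked. Your inductive lemma $g\cdot(\mu\nu)=(g\cdot\mu)(g|_\mu\cdot\nu)$ is correct, and it is exactly the statement the paper records immediately after the Proposition (with faithfulness supplying uniqueness of $g|_\mu$); your range computation $r(g|_\mu\cdot e)=g|_\mu\cdot r(e)=t(g|_\mu)=s(g\cdot\mu)$ is also sound. So your argument is valid, but it re-proves from the one-edge identity structure that the definition already hands you. What your route buys is a self-contained verification that would still work if one took the one-edge self-similarity relation, rather than the homomorphism into $\mathrm{PIso}(T_E)$, as the primitive datum; what the paper's route buys is brevity, since for it the only content of the Proposition is the compatibility of the vertex and edge actions.
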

\begin{proof}
Indeed, the vertex space $T_E^0=E^*$ is fibered over $G^{(0)}=E^0$ via the map $\mu\mapsto r(\mu)$. For $(\mu, \mu e)\in T^1_E$ we set $s(\mu, \mu e)=\mu e$ and $r(\mu, \mu e)=\mu$. Since   $r(\mu e)=r(\mu)$, the edge space $T^1_E$ is also fibered over $G^{(0)}$. The action of $G$ on $T^1_E$ is given by
\[g\cdot (\mu, \mu e)=(g\cdot \mu, g\cdot (\mu e))\;\;\text{when}\;\; d(g)=r(\mu).\]
Since
\[s(g\cdot(\mu, \mu e))=s(g\cdot \mu, g\cdot (\mu e))=g\cdot (\mu e)=g\cdot s(\mu, \mu e)\]
and
\[r(g\cdot(\mu, \mu e))=r(g\cdot \mu, g\cdot (\mu e))=g\cdot \mu =g\cdot r(\mu, \mu e),\]
the  actions on $T_E^0$ and $T_E^1$ are compatible.

\end{proof}

The faithfulness condition ensures that for each $g \in G$ and each $\mu\in E^*$ with $d(g) = r(\mu)$, there is a unique element $g|_\mu\in G$ satisfying 
\[g\cdot(\mu\nu)=(g\cdot\mu)(g|_\mu\cdot \nu)\;\text{ for all}\; \nu\in s(\mu)E^*.\] 
By Proposition 3.6 of \cite{LRRW}, self-similar groupoid actions have the following properties: for $g, h \in G, \mu\in d(g)E^*$, and $\nu\in s(\mu)E^*$,

(1) $g|_{\mu\nu} = (g|_\mu)|_\nu$;

(2) $id_{r(\mu)}|_{\mu} = id_{s(\mu)}$; 

(3) if $(h, g)\in G^{(2)}$, then $(h|_{g\cdot\mu},g|_\mu)\in G^{(2)}$ 
and $(hg)|_\mu = (h|_{g\cdot\mu})(g|_\mu)$; 

(4) $g^{-1}|_\mu=(g|_{g^{-1}\cdot\mu})^{-1}$.

\begin{dfn}
The $C^*$-algebra $C^*(G,E)$  of a self-similar action $(G,E)$ is defined as the Cuntz-Pimsner algebra of the $C^*$-correspondence \[\Mm=\Mm(G,E)=\Xx(E)\otimes_{C(E^0)}C^*(G)\] over $C^*(G)$.   Here $\Xx(E)=C(E^1)$ is the $C^*$-correspondence over $C(E^0)$ associated to the graph $E$ and $C(E^0)=C(G^{(0)})\subseteq C^*(G)$. The right action of $C^*(G)$ on $\Mm$ is the usual one and the left action is determined by the  representation
 \[W:G\to \Ll(\Mm), \;\; W_g(i_e\otimes a)=\begin{cases} i_{g\cdot e}\otimes i_{g|_e}a\;\;\text{if}\; d(g)=r(e)\\0\;\;\text{otherwise,}\end{cases}\]
 where $g\in G, i_e\in C(E^1)$ and $i_g\in C_c(G)$ are point masses and  $a\in C^*(G)$. The inner product of $\Mm$ is given by
\[\la \xi\otimes a,\eta\otimes b\ra=\la\la\eta,\xi\ra a,b\ra=a^*\la \xi,\eta\ra b\]
for $\xi, \eta\in C(E^1)$ and $a,b\in C^*(G)$. 
\end{dfn}
\begin{rmk}
Recall that the operations on $\Xx(E)$ are given by
\[(\xi\cdot a)(e)=\xi(e)a(s(e)), \;\la \xi, \eta\ra(v)=\sum_{s(e)=v}\overline{\xi(e)}\eta(e),\; (a\cdot \xi)(e)=a(r(e))\xi(e)\]
for $a\in C(E^0)$ and $ \xi, \eta\in C(E^1)$.  The elements $i_e\otimes 1$ for $e\in E^1$ form a Parseval frame for $\Mm$ and every $\zeta\in \Mm$ is a finite sum \[\zeta=\sum_{e\in E^1}i_e\otimes\la i_e\otimes 1,\zeta\ra.\]
In particular, if $\Xx(E)^*$ denotes the dual $C^*$-correspondence, then 
\[\Ll(\Mm)=\Kk(\Mm)\cong \Xx(E)\otimes_{C(E^0)}C^*(G)\otimes_{C(E^0)}\Xx(E)^*\cong M_n\otimes C^*(G),\] where $n=|E^1|$. The isomorphism is given by
\[i_{e_j}\otimes i_g\otimes i_{e_k}^*\mapsto e_{jk}\otimes i_g\] for $E^1=\{e_1,...,e_n\}$ and for matrix units $e_{jk}\in M_n$. There is a unital homomorphism $\Kk(\Xx(E))\to \Kk(\Mm)$ given by \[i_e\otimes i_f^*\to i_e\otimes 1\otimes i_f^*.\]

 Since our groupoids have finite unit space $E^0$, the orbit space for the canonical action of $G$ on $E^0$ is finite, and $C^*(G)$ is the direct sum of $C^*$-algebras of transitive groupoids. Each such transitive groupoid will be isomorphic to a groupoid of the form $V\times H\times V$ with the usual operations, for some subset $V\subseteq E^0$ and isotropy group $H$, hence its $C^*$-algebra will be  isomorphic to $C^*(H)\otimes M_{|V|}$.

\end{rmk}
We recall the following result, see Propositions 4.4 and 4.7 in \cite{LRRW}.
\begin{thm}\label{gen}
If $U_g, P_v$ and $T_e$ are the images of $g\in G, v\in E^0=G^{(0)}$ and of $e\in E^1$ in the Cuntz-Pimsner algebra $C^*(G,E)$, then
\begin{itemize}

\item $g\mapsto U_g$ is a representation by partial isometries of $G$ with $U_v=P_v$ for $v\in E^0$;

\item $T_e$ are partial isometries with $T_e^*T_e=P_{s(e)}$ and $\ds \sum_{r(e)=v}T_eT_e^*=P_v$;

\item  $U_gT_e=\begin{cases}T_{g\cdot e}U_{g|_e}\;\mbox{if}\; d(g)=r(e)\\0,\;\mbox{otherwise}\end{cases}$ and $\;\; U_gP_v=\begin{cases}P_{g\cdot v}U_g\;\mbox{if}\; d(g)=v\\0,\;\mbox{otherwise.}\;\end{cases}$
\end{itemize}
There is a gauge action $\gamma$ of $\TT$ on $C^*(G,E)$ such that $\gamma_z(U_g)=U_g,$ and $\gamma_z(T_e)=zT_e$ for $z\in \TT$.

Given $\mu=e_1\cdots e_n\in E^*$ with $e_i\in E^1$, we let $T_\mu:=T_{e_1}\cdots T_{e_n}$. Then $C^*(G,E)$ is the closed linear span of elements $T_\mu U_gT_\nu^*$, where $\mu, \nu\in E^*$ and $g\in G_{s(\nu)}^{s(\mu)}$.
\end{thm}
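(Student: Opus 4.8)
The plan is to deduce every assertion from the universal property of the Cuntz--Pimsner algebra $C^*(G,E)=\Oo_\Mm$. Write $(\pi,t)$ for the universal covariant representation of $\Mm$, so $\pi\colon C^*(G)\to C^*(G,E)$ is a $*$-homomorphism, $t\colon\Mm\to C^*(G,E)$ is linear, and the pair obeys $t(\zeta)^*t(\eta)=\pi(\la\zeta,\eta\ra)$, the module relations $\pi(a)t(\zeta)=t(\phi(a)\zeta)$ and $t(\zeta\cdot a)=t(\zeta)\pi(a)$ (here $\phi$ is the left action, with $\phi(i_g)=W_g$), and Cuntz--Pimsner covariance $\pi(a)=\psi_t(\phi(a))$ for $a\in J_\Mm$, where $\psi_t\colon\Kk(\Mm)\to C^*(G,E)$ is the map with $\psi_t(\Theta_{\zeta,\eta})=t(\zeta)t(\eta)^*$. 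I would set $U_g:=\pi(i_g)$, $P_v:=\pi(i_v)$ and $T_e:=t(i_e\otimes1)$. The first bullet is then immediate: since $E^0$ is finite, $G$ is discrete and $g\mapsto i_g$ is the canonical representation of $G$ by partial isometries in $C^*(G)$, with $i_g^*i_g=i_{d(g)}$, $i_gi_g^*=i_{t(g)}$ and $i_v$ a projection; applying $\pi$ gives that $g\mapsto U_g$ is a representation by partial isometries with $U_v=P_v$. For the first relation in the second bullet I compute $\la i_e\otimes1,i_e\otimes1\ra=\la i_e,i_e\ra=i_{s(e)}$, whence $T_e^*T_e=\pi(i_{s(e)})=P_{s(e)}$ and each $T_e$ is a partial isometry.

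The one genuinely non-formal step, which I expect to be the main obstacle, is the covariance relation $\sum_{r(e)=v}T_eT_e^*=P_v$. I would first establish the identity
\[\phi(i_v)=\sum_{r(e)=v}\Theta_{i_e\otimes1,\,i_e\otimes1}\quad\text{in}\quad\Kk(\Mm),\]
by evaluating both sides on a spanning vector $i_f\otimes b$; here one uses that $\la i_e,i_f\ra$ vanishes unless $e=f$, in which case it equals $i_{s(e)}$, together with the balanced-tensor identity $i_f\otimes b=i_f\otimes i_{s(f)}b$. This identity yields the relation only after applying $\psi_t$, which is legitimate precisely because $i_v\in J_\Mm$: the condition $\phi(i_v)\in\Kk(\Mm)$ is automatic since $\Ll(\Mm)=\Kk(\Mm)$, and the hypothesis that $E$ has no sources forces the left action $\phi$ to be injective, so $J_\Mm=C^*(G)$. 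Applying $\psi_t$ to the displayed identity then gives $P_v=\psi_t(\phi(i_v))=\sum_{r(e)=v}T_eT_e^*$. Getting this ideal/covariance bookkeeping exactly right is the crux of the argument.

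The third bullet is now formal. Taking $a=i_g$ and $\zeta=i_e\otimes1$ in $\pi(a)t(\zeta)=t(\phi(a)\zeta)$ and using $\phi(i_g)=W_g$ gives $U_gT_e=t(W_g(i_e\otimes1))$, which vanishes unless $d(g)=r(e)$, in which case $W_g(i_e\otimes1)=i_{g\cdot e}\otimes i_{g|_e}$ and the right-module relation turns this into $t((i_{g\cdot e}\otimes1)\cdot i_{g|_e})=T_{g\cdot e}U_{g|_e}$. Likewise $U_gP_v=\pi(i_gi_v)$ equals $0$ unless $d(g)=v$, when it equals $\pi(i_g)=P_{t(g)}U_g=P_{g\cdot v}U_g$ because $g\cdot v=t(g)$. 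The gauge action comes from universality: for each $z\in\TT$ the pair $(\pi,z\,t)$ is again a covariant representation, inducing $\gamma_z\in\mathrm{End}(C^*(G,E))$ with $\gamma_z\gamma_w=\gamma_{zw}$ and $\gamma_1=\mathrm{id}$, so each $\gamma_z$ is an automorphism fixing $U_g$ and scaling $T_e$ by $z$; strong continuity is the usual estimate on generators.

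Finally, for the spanning statement I would use the standard description $C^*(G,E)=\clsp\{t^{(m)}(\zeta)\,t^{(n)}(\eta)^*:\zeta\in\Mm^{\otimes m},\ \eta\in\Mm^{\otimes n},\ m,n\ge0\}$, with $\Mm^{\otimes0}=C^*(G)$ and $t^{(0)}=\pi$. Because $\{i_e\otimes1\}_{e\in E^1}$ is a Parseval frame, $\Mm^{\otimes m}$ is spanned by vectors $i_\mu\otimes a$ with $\mu\in E^m$ and $a\in C^*(G)$, and $t^{(m)}(i_\mu\otimes a)=T_\mu\pi(a)$ for $T_\mu=T_{e_1}\cdots T_{e_m}$. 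A generic spanning element is thus $T_\mu\pi(a)\pi(b)^*T_\nu^*=T_\mu\pi(ab^*)T_\nu^*$, and expanding $ab^*$ over the point masses $i_g$ reduces the algebra to $\clsp\{T_\mu U_gT_\nu^*\}$. To pin down the index, I would prove $T_\mu^*T_\mu=P_{s(\mu)}$ by a telescoping induction from $T_e^*T_e=P_{s(e)}$ and $P_{r(e)}T_e=T_e$, so that $T_\mu=T_\mu P_{s(\mu)}$ and $T_\nu^*=P_{s(\nu)}T_\nu^*$; then $T_\mu U_gT_\nu^*=T_\mu\,\pi(i_{s(\mu)}i_gi_{s(\nu)})\,T_\nu^*$ vanishes unless $t(g)=s(\mu)$ and $d(g)=s(\nu)$, i.e.\ unless $g\in G_{s(\nu)}^{s(\mu)}$, which gives the claimed closed linear span.
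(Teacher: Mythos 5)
The paper itself contains no proof of this statement: it is recalled verbatim from Propositions 4.4 and 4.7 of the cited paper of Laca--Raeburn--Ramagge--Whittaker, so your proposal has to be judged as a reconstruction of that argument. Most of it is exactly the standard argument, correctly executed: the computation $\la i_e\otimes 1,i_e\otimes 1\ra=i_{s(e)}$, the identity $\phi(i_v)=\sum_{r(e)=v}\Theta_{i_e\otimes 1,i_e\otimes 1}$, the module-relation derivation of the third bullet, the gauge action from universality, and the reduction of the spanning claim to $T_\mu^*T_\mu=P_{s(\mu)}$ are all fine.

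The genuine gap is the step you yourself call the crux. The claim that ``$E$ has no sources forces the left action $\phi$ to be injective'' is true for the graph correspondence $\Xx(E)$ over $C(E^0)$ (a pointwise-support argument), but it is false for $\Mm$ over $C^*(G)$, where injectivity of the integrated left action is a representation-theoretic issue. Concretely, let $E$ have one vertex $v$ and three loops, and let $G=S_3$ act by permuting the loops with all restrictions trivial, i.e.\ $\sigma\cdot(e\mu)=(\sigma\cdot e)\mu$ and $\sigma|_e=v$. This is a faithful self-similar action on a finite graph with no sources, yet $\phi(a)=\pi_{\mathrm{perm}}(a)\otimes 1$, where $\pi_{\mathrm{perm}}\colon \CC[S_3]\to M_3(\CC)$ is the permutation representation; its kernel is the sign summand of $\CC[S_3]$, so $\ker\phi\neq 0$. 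Consequently Katsura's ideal $J_\Mm=(\ker\phi)^{\perp}$ does not contain $i_v=1$, and your appeal to Cuntz--Pimsner covariance to obtain $\sum_{r(e)=v}T_eT_e^*=P_v$ is unavailable. Worse, in Katsura's $\Oo_\Mm$ this relation genuinely fails for this example: if it held, any $a\in\ker\phi$ would satisfy
\[\pi(a)=\pi(a)\sum_{e}T_eT_e^*=\sum_{e}t\bigl(\phi(a)(i_e\otimes 1)\bigr)T_e^*=0,\]
contradicting the injectivity of Katsura's universal $\pi$. So the theorem is only correct if ``Cuntz--Pimsner algebra'' is read as in the cited source, namely as the quotient of the Toeplitz algebra by the covariance relations over all of $\phi^{-1}(\Kk(\Mm))=C^*(G)$ (the Fowler--Raeburn ``absolute'' convention, which agrees with Katsura's exactly when $\phi$ is injective); under that reading your crux step holds by construction and needs no injectivity, and the rest of your proof goes through verbatim. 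Alternatively, injectivity of $\phi$ can be proved if one assumes the action is pseudo free (then each restriction map on edge stabilizers is injective and a finite-index induction argument gives faithfulness), but pseudo freeness is introduced only later in the paper and is not a hypothesis of this theorem, so it cannot be invoked here.
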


For each $k\ge 1$, consider $\Ff_k$ the closed linear span of elements $T_\mu U_gT_\nu^*$ with $\mu,\nu\in E^k$  and $g\in G_{s(\nu)}^{s(\mu)}$. Then the fixed point algebra  $\Ff(G,E):=C^*(G,E)^{\TT}$ under the gauge action is isomorphic to $\ds \varinjlim \Ff_k$. We have \[\Ff_k\cong \Ll(\Mm^{\otimes k})\cong \Xx(E)^{\otimes k}\otimes_{C(E^0)}C^*(G)\otimes_{C(E^0)}\Xx(E)^{*\otimes k}\]
using the map $T_\mu U_gT_\nu^*\mapsto i_\mu\otimes i_g\otimes i_\nu^*$, where  $i_\mu\in \Xx(E)^{\otimes k}=C(E^k)$ are point masses. The embeddings $\Ff_k\hookrightarrow \Ff_{k+1}$ are determined by the map \[\phi=\phi_W:C^*(G)\to \Ll(\Mm),\;\;  \phi_W(i_g)=W_g.\] In particular, for $a\in C^*(G)$ we get
\[\phi(a)=\sum_{e\in E^1}\theta_{i_e\otimes 1, a^*(i_e\otimes 1)},\]
where $\theta_{\xi, \eta}(\zeta)=\xi\la \eta, \zeta\ra$. 
The embeddings
$\Ff_k\hookrightarrow \Ff_{k+1}$ are then
\[\phi_k(i_{\mu}\otimes i_g\otimes i_\nu^*)=\begin{cases}\ds \sum_{x\in d(g)E^1}i_{\mu y} \otimes i_{g|_x}\otimes i_{\nu x}^*,\;\text{if}\; g\in G_{s(\nu)}^{s(\mu)}\;\text{and}\; g\cdot x=y \\0,\;\text{ otherwise.}\end{cases}\]
\begin{rmk}
The $C^*$-algebra $C^*(G,E)$ can be described as the crossed product of $\Ff(G,E)$ by an endomorphism and in many cases, knowledge about $K_*(\Ff_k)$ is sufficient to determine $K_*(\Ff(G,E))$ and $K_*(C^*(G,E))$.
For the case when $G$ is a group, see section 3 in \cite{N1}.
In the particular case when $g|_e=g$ for all $g\in G$ and $e\in d(g)E^1$ we have $C^*(G,E)\cong C^*(E)\rtimes G$.
\end{rmk}
\bigskip

%%%%%5
\section{Exel-Pardo groupoids for self-similar actions}

\bigskip

In this section, we generalize results from \cite{EP} and we define the groupoid associated to a self-similar action of a groupoid $G$ on the path space of a finite directed graph $E$ with no sources.

 As in \cite{EP},  we first define the   inverse semigroup 
\[\Ss(G,E)=\{(\alpha, g, \beta): \alpha, \beta\in E^*, g\in G_{s(\beta)}^{s(\alpha)} \}\cup\{0\}\]
associated to the self-similar  action $(G,E)$, with operations
\[(\alpha, g, \beta)(\lambda, h, \omega)=\begin{cases}(\alpha, g(h|_{h^{-1}\cdot\mu}), \omega(h^{-1}\cdot \mu))&\;\text{if}\; \beta=\lambda\mu\\(\alpha(g\cdot\mu), g|_{\mu}h, \omega)&\;\text{if}\; \lambda=\beta\mu\\0&\;\text{otherwise}\end{cases}\]
and $(\alpha, g, \beta)^*=(\beta, g^{-1}, \alpha)$ for $\alpha, \beta, \lambda, \omega\in E^*$. These operations make sense since

\[ d(g)=s(\beta)=t(h|_{h^{-1}\cdot\mu})\;\text{and}\; d(g(h|_{h^{-1}\cdot\mu}))=s(\omega(h^{-1}\cdot \mu)) \;\text{when}\; \beta=\lambda\mu,\]\[d(g|_\mu)=s(\mu)=t(h)\;\text{and}\; d(g|_{\mu}h)=s(\omega)\; \;\text{when}\;\lambda=\beta\mu .\]
Note that $(\alpha,g,\beta)(\beta, h, \omega)=(\alpha, gh,\omega)$ and the nonzero idempotents are of the form $z_\alpha=(\alpha, s(\alpha),\alpha)$.

The inverse semigroup $\Ss(G,E)$ acts on the infinite path space $E^\infty$ by partial homeomorphisms. The action of $(\alpha, g, \beta)\in \Ss(G,E)$ on $\xi=\beta\mu\in \beta E^\infty$ is given by \[(\alpha, g, \beta)\cdot \beta\mu=\alpha(g\cdot\mu)\in \alpha E^\infty.\] The action of $G$ on $E^\infty$ is defined by $g\cdot \mu=\eta$, where for all $n$ we have $\eta_1\cdots \eta_n=g\cdot(\mu_1\cdots\mu_n)$. Note that $r(g\cdot \mu)=g\cdot r(\mu)=g\cdot s(\beta)=s(\alpha)$, so the action is well defined.

The groupoid of germs associated with $(\Ss(G,E), E^\infty)$ is
\[\Gg(G,E)=\{[\alpha, g, \beta; \xi]: \alpha, \beta\in E^*,\; g\in G^{s(\alpha)}_{s(\beta)},\; \xi\in \beta E^\infty\}.\]
Two germs $[\alpha, g,\beta;\xi], [\alpha',g',\beta';\xi']$ in $\Gg(G,E)$ are equal if and only if $\xi=\xi'$ and there exists a neighborhood $V$ of $\xi$ such that $(\alpha, g, \beta)\cdot \eta=(\alpha', g', \beta')\cdot \eta$ for all $\eta\in V$. 
We obtain that $\xi=\beta\lambda\zeta$ for $\lambda\in E^*$ and $\zeta\in E^\infty$, with $r(\lambda)=s(\beta)$ and $r(\zeta)=s(\lambda)$. Moreover,  \[\alpha'=\alpha(g\cdot \lambda), \beta'=\beta\lambda,\;\mbox{and}\; g'=g|_\lambda.\]
The unit space of $\Gg(G,E)$ is
\[\Gg(G,E)^{(0)}=\{[\alpha, s(\alpha), \alpha; \xi]: \xi\in \alpha E^\infty\},\] identified with $E^\infty$ by the map $[\alpha, s(\alpha), \alpha; \xi]\mapsto \xi$. 

The terminus and domain maps of the groupoid $\Gg(G,E)$ are given by
\[t([\alpha, g,\beta; \beta\mu])=\alpha(g\cdot\mu),\;\; d([\alpha, g, \beta;\beta\mu])=\beta\mu.\]
If two elements $\gamma_1,\gamma_2\in \Gg(G,E)$ are composable, then \[\gamma_1=[\alpha_1, g_1, \alpha_2; \alpha_2(g_2\cdot\xi)],\;\; \gamma_2=[\alpha_2, g_2, \beta; \beta\xi]\] for some $\alpha_1, \alpha_2, \beta\in E^*, \xi\in E^\infty, (g_1, g_2)\in G^{(2)}$ and in this case
\[\gamma_1\gamma_2=[\alpha_1, g_1g_2, \beta;\beta\xi].\]
In particular, 
\[[\alpha, g,\beta;\beta\mu]^{-1}=[\beta, g^{-1}, \alpha;\alpha(g\cdot \mu)].\]

The topology on $\Gg(G,E)$ is generated by the compact open bisections of the form
\[B(\alpha, g, \beta; V)=\{[\alpha, g, \beta; \xi]\in \Gg(G,E): \xi=\beta\zeta\in V\},\]
where $\alpha, \beta\in E^*, g\in G_{s(\beta)}^{s(\alpha)}$ are fixed, and $V\subseteq Z(\beta)=\beta E^\infty$ is an open subset. 

\begin{dfn}
A self-similar groupoid action $(G,E)$ is called pseudo free if for every $g\in G$  and every $e\in d(g)E^1$, the condition $g\cdot e=e$ and $g|_{e}=s(e)$ implies that $g=r(e)$.
 \end{dfn}
\begin{rmk}
If $(G,E)$ is pseudo free,  then $g_1\cdot \alpha=g_2\cdot \alpha$ and $g_1|_\alpha=g_2|_\alpha$ for some $\alpha\in E^*$ implies $g_1=g_2$.
\end{rmk}
\begin{proof}
Indeed, since $g_2^{-1}g_1\cdot\alpha=\alpha$ and $g_2^{-1}g_1|_\alpha=g_2^{-1}|_{g_1\cdot\alpha}g_1|_\alpha=d(g_1|_\alpha)=s(\alpha)$, it follows that $g_2^{-1}g_1=r(\alpha)$, so $g_1=g_2$.
\end{proof}

\begin{thm}
If the action of $G$ on $E$ is pseudo free, then the groupoid $\Gg(G,E)$ is Hausdorff and  its $C^*$-algebra $C^*(\Gg(G,E))$
is isomorphic to the Cuntz-Pimsner algebra $C^*(G, E)$.
\end{thm}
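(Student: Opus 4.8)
The plan is to prove the two assertions separately: first that $\Gg(G,E)$ is Hausdorff under the pseudo-freeness hypothesis, then that $C^*(\Gg(G,E)) \cong C^*(G,E)$.

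For Hausdorffness, I would use the standard criterion for groupoids of germs: the germ groupoid of an inverse semigroup action is Hausdorff precisely when the set of germs of idempotents is closed, equivalently when for each pair of elements $s,t \in \Ss(G,E)$ the set of points where their germs agree is clopen. Concretely, I would take two germs $[\alpha,g,\beta;\xi]$ and $[\alpha',g',\beta';\xi]$ at the same point $\xi$ and show that if they agree on a net of points converging to $\xi$, then they agree at $\xi$ itself; the key computation reduces (after using the germ-equality description already given in the excerpt, namely $\alpha'=\alpha(g\cdot\lambda)$, $\beta'=\beta\lambda$, $g'=g|_\lambda$) to comparing $g\cdot e$ with $e$ and $g|_e$ with $s(e)$ on edges $e$, which is exactly where the pseudo-free condition is designed to bite. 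The \textbf{pseudo-free hypothesis}, together with the Remark preceding this theorem (that $g_1\cdot\alpha=g_2\cdot\alpha$ and $g_1|_\alpha=g_2|_\alpha$ force $g_1=g_2$), is what guarantees that two distinct group-like germs cannot be asymptotically glued, so the germs separate and the diagonal is closed. This is the step I expect to require the most care.

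For the isomorphism $C^*(\Gg(G,E)) \cong C^*(G,E)$, I would construct the map by matching generators. The groupoid $C^*$-algebra $C^*(\Gg(G,E))$ is generated by the indicator functions $\mathbf{1}_{B(\alpha,g,\beta;V)}$ of the basic compact open bisections. I would send the generators of $C^*(G,E)$ from Theorem \ref{gen} to such bisections, taking
\[
T_e \mapsto \mathbf{1}_{B(e,\, s(e),\, s(e);\, Z(s(e)))}, \qquad U_g \mapsto \mathbf{1}_{B(t(g),\, g,\, d(g);\, Z(d(g)))},
\]
and checking that the images satisfy the relations of Theorem \ref{gen}: that the $T_e$ are partial isometries with $T_e^*T_e = P_{s(e)}$ and $\sum_{r(e)=v} T_eT_e^* = P_v$, that $g \mapsto U_g$ is a representation of $G$ by partial isometries, and crucially that the covariance relation $U_g T_e = T_{g\cdot e} U_{g|_e}$ holds. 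Each relation becomes a statement about the multiplication of indicator functions of bisections, which is governed by the groupoid product and the germ-composition formula $\gamma_1\gamma_2=[\alpha_1,g_1g_2,\beta;\beta\xi]$ recorded above. Since $\Gg(G,E)$ is Hausdorff (by the first part) and \'etale, the full and reduced $C^*$-algebras behave well and $C_c(\Gg(G,E))$ is densely spanned by these bisection indicators.

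To complete the argument I would invoke the universal property of the Cuntz--Pimsner algebra. The relations verified above show that the assignment extends to a $*$-homomorphism $\Phi: C^*(G,E)\to C^*(\Gg(G,E))$, and surjectivity follows because the basic bisection indicators, which generate $C^*(\Gg(G,E))$, all lie in the image (every $\mathbf{1}_{B(\alpha,g,\beta;V)}$ is a product of the generators above). For injectivity I would apply a gauge-invariant uniqueness theorem: the groupoid carries a continuous cocycle $\Gg(G,E)\to\ZZ$ given by $[\alpha,g,\beta;\xi]\mapsto |\alpha|-|\beta|$, inducing a gauge action of $\TT$ on $C^*(\Gg(G,E))$ that $\Phi$ intertwines with the gauge action $\gamma$ of Theorem \ref{gen}; since the left action on the correspondence $\Mm$ is injective (the $P_v$ are mapped to nonzero projections), the gauge-invariant uniqueness theorem for Cuntz--Pimsner algebras forces $\Phi$ to be faithful. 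The main obstacle, as noted, is the Hausdorffness argument, since the isomorphism itself is then a relatively mechanical matching of relations once a Hausdorff \'etale groupoid is in hand.
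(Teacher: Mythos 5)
Your construction coincides with the paper's own: the paper likewise observes that pseudo-freeness makes the germ $[\alpha,g,\beta;\xi]$ determine $g$, cites Proposition 12.1 of \cite{EP} for Hausdorffness (your direct germ-separation argument is precisely the content of that proposition), and defines the isomorphism on generators by exactly your formulas, $P_v\mapsto \chi_{B(v,v,v;Z(v))}$, $T_e\mapsto \chi_{B(e,s(e),s(e);Z(s(e)))}$, $U_g\mapsto \chi_{B(t(g),g,d(g);Z(d(g)))}$. The Hausdorffness part and the construction and surjectivity of your map $\Phi$ are sound.

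The genuine gap is in your injectivity step. Katsura's gauge-invariant uniqueness theorem requires that the representation of the \emph{coefficient algebra}, here $\Phi|_{C^*(G)}\colon C^*(G)\to C^*(\Gg(G,E))$, be injective. Your justification --- that ``the left action on the correspondence $\Mm$ is injective (the $P_v$ are mapped to nonzero projections)'' --- conflates a property of the correspondence with this hypothesis, and in any case it cannot suffice: unlike the graph-algebra situation, the coefficient algebra is not spanned by the vertex projections. It is $C^*(G)$, which is typically infinite dimensional (in the paper's own example it is $M_3(C(\TT))$, and in general it contains the full group $C^*$-algebras of the isotropy groups); a homomorphism can be nonzero on every $P_v$ and still kill a large ideal of $C^*(G)$. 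Moreover, faithfulness of $C^*(G)\to C^*(\Gg(G,E))$ amounts to saying that every representation of $G$ is weakly contained in restrictions of representations of $\Gg(G,E)$, which is not automatic for full $C^*$-algebras (and if one takes the reduced groupoid algebra as target, it genuinely requires amenability-type hypotheses on the isotropy). Since the embedding of $C^*(G)$ into $C^*(\Gg(G,E))$ is itself a consequence of the theorem being proved, invoking it as an input is close to circular. The standard way around this, implicit in the paper's reliance on Theorem \ref{gen} and on \cite{EP}, is to avoid the uniqueness theorem altogether: both $C^*(G,E)$ and $C^*(\Gg(G,E))$ are universal for the same data --- representations of $G$ by partial isometries together with the $T_e$ satisfying the covariance relations on one side, and (tight) representations of the inverse semigroup $\Ss(G,E)$, equivalently representations of the ample groupoid by its compact open bisections, on the other --- so one constructs a homomorphism $\Psi\colon C^*(\Gg(G,E))\to C^*(G,E)$ in the reverse direction and checks that $\Psi\circ\Phi$ and $\Phi\circ\Psi$ are the identity on generators. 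If you wish to keep the gauge-invariant uniqueness route, you must first prove injectivity of $\Phi|_{C^*(G)}$ by an independent argument; as written, that step does not go through.
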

 \begin{proof}
 Since $(G,E)$ is pseudo free, it follows that $[\alpha,g,\beta;\xi]=[\alpha, g',\beta;\xi]$ if and only if $g=g'$. Moreover, the groupoid $\Gg(G,E)$ is Hausdorff, see Proposition 12.1 in \cite{EP}. Using the properties given in Theorem \ref{gen} and the groupoid multiplication, the isomorphism $\phi: C^*(G,E)\to C^*(\Gg(G,E))$ is given by
 \[\phi(P_v)=\chi_{B(v, v,v;Z(v))},\]\[ \phi(T_e)=\chi_{B(e, s(e),s(e);Z(s(e)))},\]\[\phi(U_g)=\chi_{B(t(g),g,d(g);Z(d(g)))}\]
 for $v\in E^0, e\in E^1$ and $g\in G$. Here $\chi_A$ is the indicator function of $A$.
 \end{proof}

Recall that ample Hausdorff groupoids which are similar or Morita equivalent have isomorphic homology, see Lemma 4.3 and Theorem 3.12 in \cite{FKPS}. The general strategy of computing the homology of the ample  groupoid $\Gg(G,E)$ is the following. 

There is a cocycle $\rho: \Gg(G,E)\to \ZZ$ given by $[\alpha, g, \beta; \xi]\mapsto |\alpha|-|\beta|$ with kernel
\[\Hh(G,E)=\{[\alpha, g, \beta; \xi]\in \Gg(G,E) :|\alpha|=|\beta|\}.\]

It follows from Theorem \ref{seq} that  we have a spectral sequence 
\[E_{p,q}^2=H_p(\ZZ,H_q(\Hh(G,E)))\Rightarrow H_{p+q}(\Gg(G,E)).\]
Now $\Hh(G,E)=\bigcup_{k\ge 1} \Hh_k(G,E)$ where
\[\Hh_k(G,E)=\{[\alpha, g, \beta; \xi]\in \Gg(G,E) :|\alpha|=|\beta|=k\}.\]
There are groupoid homomorphisms \[\tau_k:\Hh_k(G,E)\to G,\; \tau_k([\alpha,g,\beta;\xi])=g\] and $\ker\tau_k$ is AF for all $k\ge 1$. Indeed, consider $R_k$ the equivalence relation on $E^k$ such that $(\alpha, \beta)\in R_k$ if there is $g\in G$ with $g\cdot s(\beta)=s(\alpha)$. Then the map $[\alpha, g, \beta;\xi]\mapsto ((\xi, g),(\alpha, \beta))$ gives an isomorphism between $\Hh_k(G,E)$ and $(E^\infty\rtimes G)\times R_k$, so $\ker\tau_k$ is isomorphic to $E^\infty\times R_k$. 
It follows that we have another spectral sequence 
\[E_{p,q}^2=H_p(G,H_q(\ker\tau_k))\Rightarrow H_{p+q}(\Hh_k(G,E)).\]
It is known that $H_0(\ker\tau_k)\cong K_0(C^*(\ker \tau_k))$ and $H_q(\ker\tau_k)=0$ for $k\ge 1$. Also, $\ker\tau_k$ is similar with $\Hh_k(G,E)\times_{\tau_k}G$.
Assuming that we computed $H_q(\Hh_k(G,E))$ for all $k$, then
\[H_q(\Hh(G,E))=\varinjlim_{k\to \infty}H_q(\Hh_k(G,E))\]
can be computed using the inclusion maps \[j_k:\Hh_k(G,E)\hookrightarrow \Hh_{k+1}(G,E),\;\; j_k([\alpha, g, \beta; \beta x\mu])=[\alpha y, g|_x,\beta x; \beta x\mu],\] where $x\in E^1$  and  $g\cdot x=y$.  
\bigskip

%%%%
\section{example}

\bigskip

Consider again the graph from Example \ref{ex}

\[\begin{tikzpicture}[shorten >=0.4pt,>=stealth, semithick]
\renewcommand{\ss}{\scriptstyle}
\node[inner sep=1.0pt, circle, fill=black]  (u) at (-2,0) {};
\node[below] at (u.south)  {$\ss u$};
\node[inner sep=1.0pt, circle, fill=black]  (v) at (0,0) {};
\node[below] at (v.south)  {$\ss v$};
\node[inner sep=1.0pt, circle, fill=black]  (w) at (2,0) {};
\node[below] at (w.south)  {$\ss w$};

\draw[->, blue] (u) to [out=45, in=135]  (v);
\node at (-1,0.7){$\ss e_2$};
\draw[->, blue] (v) to [out=-135, in=-45]  (u);
\node at (-1,-0.7) {$\ss e_3$};
\draw[->, blue] (v) to [out=45, in=135]  (w);
\node at (1,0.7){$\ss e_4$};
\draw[->, blue] (v) to (w);
\node at (1,0.1){$\ss e_5$};
\draw[->, blue] (w) to [out=-135, in=-45]  (v);
\node at (1,-0.7) {$\ss e_6$};

\draw[->, blue] (u) .. controls (-3.5,1.5) and (-3.5, -1.5) .. (u);
\node at (-3.35,0) {$\ss e_1$};

\end{tikzpicture}
\]
with $E^0=\{u,v,w\}$ and $ E^1=\{e_1, e_2, e_3, e_4, e_5, e_6\}$. 

Consider the groupoid $G$ with unit space $G^{(0)}=\{u,v,w\}$ and generators $a,b,c$  where $d(a)=u, t(a)=d(b)=v, d(c)=t(b)=w$.

\[\begin{tikzpicture}[shorten >=0.4pt,>=stealth, semithick]
\renewcommand{\ss}{\scriptstyle}
\node[inner sep=1.0pt, circle, fill=black]  (u) at (-2,0) {};
\node[below] at (u.south)  {$\ss u$};
\node[inner sep=1.0pt, circle, fill=black]  (v) at (0,0) {};
\node[below] at (v.south)  {$\ss v$};
\node[inner sep=1.0pt, circle, fill=black]  (w) at (2,0) {};
\node[below] at (w.south)  {$\ss w$};
\draw[->, red] (u) to (v);
 \node at (-1,0.25) {$\ss a$}; 

\draw[->, red] (v) to [out=45, in=135]  (w);
\node at (1,0.7){$\ss b$};
\draw[->, red] (w) to [out=-135, in=-45]  (v);
\node at (1,-0.7) {$\ss c$};

\end{tikzpicture}
\]
We define the action of $G$ by
 \[a\cdot e_1=e_2,\;\; a|_{e_1}=u,\;\; a\cdot e_3=e_6,\;\; a|_{e_3}=b,\]
\[b\cdot e_2=e_5,\;\; b|_{e_2}=a, \;\; b\cdot e_6=e_4, \;\; b|_{e_6}=c,\] 
\[c\cdot e_4=e_2,\;\; c|_{e_4}=a^{-1},\;\; c\cdot e_5=e_6,\;\; c|_{e_5}=b.\]
The actions of $a^{-1}, b^{-1}, c^{-1}$ and their restrictions are then uniquely determined:
\[a^{-1}\cdot e_2=e_1,\;\; a^{-1}|_{e_2}=u,\;\; a^{-1}\cdot e_6=e_3,\;\; a^{-1}|_{e_6}=b^{-1},\]
\[b^{-1}\cdot e_5=e_2,\;\;b^{-1}|_{e_5}=a^{-1},\;\; b^{-1}\cdot e_4=e_6,\;\; b^{-1}|_{e_4}=c^{-1},\]
\[c^{-1}\cdot e_2=e_4,\;\; c^{-1}|_{e_2}=a,\;\; c^{-1}\cdot e_6=e_5,\;\;c^{-1}|_{e_6}=b^{-1}.\]  
The actions of the units $u,v,w$ and their restrictions are 
\[u\cdot e_1=e_1,\; u|_{e_1}=u, \; u\cdot e_3=e_3, \; u|_{e_3}=v, \;  v\cdot e_2=e_2, \; v|_{e_2}=u,\]
\[ v\cdot e_6=e_6, \; v|_{e_6}=w,\;  w\cdot e_4=e_4, \; w|_{e_4}=v,\;  w\cdot e_5=e_5, \; w|_{e_5}=v.\]

This data determine a pseudo free self-similar action of $G$ on the path space of $E$. We can  characterize the action  by the formulas
\[a\cdot e_1\mu=e_2\mu, \;  \; a\cdot e_3\mu =e_6(b\cdot\mu),\; \; b\cdot(e_2\mu)=e_5(a\cdot\mu), \]\[ b\cdot e_6\mu=e_4(c\cdot\mu),\;\; c\cdot e_4\mu=e_2(a^{-1}\cdot\mu), \;\; c\cdot e_5\mu=e_6(b\cdot\mu)\]
where $\mu\in E^*$, and these  determine uniquely an action of  $G$ on $E^*$ and on the graph $T_E$.

We will prove  that $G$ is a transitive groupoid with isotropy isomorphic to $\ZZ$, hence $C^*(G)\cong M_3(C(\TT))$ since $|E^0|=3$. Indeed, there is only one orbit for the action of $G$ on its unit space, and let's show that the cyclic group $G_u^u=\la a^{-1}cba\ra$ is isomorphic to $\ZZ$. Since
\[(a^{-1}cba)\cdot e_1=(a^{-1}cb)\cdot e_2=(a^{-1}c)\cdot e_5=a^{-1}\cdot e_6=e_3\]
and
\[(a^{-1}cba)\cdot e_3=(a^{-1}cb)\cdot e_6=(a^{-1}c)\cdot e_4=a^{-1}\cdot e_2=e_1,\]
it follows that $(a^{-1}cba)^n$ is not the identity for $n$ odd. Now
\[(a^{-1}cba)|_{e_1}=(a^{-1}cb)|_{a\cdot e_1}a|_{e_1}=(a^{-1}cb)|_{e_2}(a^{-1}c)|_{b\cdot e_2}b|_{e_2}=\]
\[=(a^{-1}c)|_{e_5}a=a^{-1}|_{c\cdot e_5}c|_{e_5}a=(a|_{a^{-1}\cdot e_6})^{-1}ba=b^{-1}ba=a\]
and similarly 
\[(a^{-1}cba)|_{e_3}=a^{-1}cb.\]
We deduce
\[(a^{-1}cba)^2|_{e_1}=(a^{-1}cba)|_{(a^{-1}cba)\cdot e_1}(a^{-1}cba)|_{e_1}=(a^{-1}cba)|_{e_3}a=a^{-1}cba.\]
By induction,
\[(a^{-1}cba)^{2k}|_{e_1}=(a^{-1}cba)^k.\]
We consider the action of $(a^{-1}cba)^{2k}$ on sufficiently long paths of the form $\mu=e_1\cdots e_1$ and after repeatedly reducing  by factors of $2$, we arrive at $(a^{-1}cba)^{2k}|\mu=(a^{-1}cba)^m$ with $m$ odd, in particular
\[(a^{-1}cba)^{2k}\cdot \mu e_1=\mu(a^{-1}cba)^m\cdot e_1=\mu e_3,\]
so $(a^{-1}cba)^n$ is not the identity for $n$ even.
It follows that $G_u^u=\la a^{-1}cba\ra$ is isomorphic to $\ZZ$.

An isomorphism of $G$ with the groupoid $G^{(0)}\times \ZZ\times G^{(0)}$ is given by the map
\[a\mapsto (v,1,u),\; b\mapsto (w,1,v),\; c\mapsto (v,1,w),\]
and $G_u^u\cong \{(u,k,u): k\in 2\ZZ\}\cong \ZZ$.
In this case, since $|E^1|=6$ and $C^*(G)\cong M_3(C(\TT))$, it follows that \[\Ff_k\cong \Ll(\Mm^{\otimes k})\cong \Xx(E)^{\otimes k}\otimes_{C(E^0)} C^*(G)\otimes_{C(E^0)}\Xx(E)^{*\otimes k}\cong  M_{3\cdot 6^k}(C(\TT)),\] so $K_0(\Ff_k)\cong \ZZ\cong K_1(\Ff_k)$ and $\Ff(G,E)$ is an $A\TT$-algebra. Recall that the embeddings
$\Ff_k\hookrightarrow \Ff_{k+1}$ are determined by
\[\phi_k(i_{\mu}\otimes i_g\otimes i_\nu^*)=\begin{cases}\ds \sum_{x\in d(g)E^1}i_{\mu y} \otimes i_{g|_x}\otimes i_{\nu x}^*,\;\text{if}\; g\in G_{s(\nu)}^{s(\mu)}\;\text{and}\; g\cdot x=y \\0,\;\text{ otherwise.}\end{cases}\]
 In particular,
\[i_{\mu}\otimes i_a\otimes i_{\nu}^*\mapsto i_{\mu e_2}\otimes i_u\otimes i_{\nu e_1}^*+i_{\mu e_6}\otimes i_b\otimes i_{\nu e_3}^*,\]
\[i_{\mu}\otimes i_b\otimes i_{\nu}^*\mapsto i_{\mu e_5}\otimes i_a\otimes i_{\nu e_2}^*+i_{\mu e_4}\otimes i_c\otimes i_{\nu e_6}^*,\]
\[i_{\mu}\otimes i_c\otimes i_{\nu}^*\mapsto i_{\mu e_2}\otimes i_{a^{-1}}\otimes i_{\nu e_4}^*+i_{\mu e_6} \otimes i_b\otimes i_{\nu e_5}^*.\]

To compute the $K$-theory of $\Ff(G,E)$, we first determine the maps \[\Phi_i=[\phi_k]_i:K_i(C^*(G))\cong \ZZ\to K_i(\Ll(\Mm))\cong \ZZ\] for $i=0,1$. Since $K_0(C^*(G))$ is generated by $[i_u]$ and 
\[i_u\mapsto i_{e_1}\otimes i_u\otimes i_{e_1}^*+i_{e_3}\otimes i_v\otimes i_{e_3}^*,\]
it follows that $\Phi_0$ is multiplication  by $2$. Since $K_1(C^*(G))$ is generated by $[zi_u+i_v+i_w]$ and
\[zi_u+i_v+i_w\mapsto z(i_{e_1}\otimes i_u\otimes i_{e_1}^*+i_{e_3}\otimes i_v\otimes i_{e_3}^*)+\]\[+i_{e_2}\otimes i_u\otimes i_{e_2}^*+i_{e_6}\otimes i_w\otimes i_{e_6}^*+i_{e_4}\otimes i_v\otimes i_{e_4}^*+i_{e_5}\otimes i_v\otimes i_{e_5}^*,\]
it follows that $\Phi_1$ is also multiplication by $2$.
We obtain
\[K_0(\Ff(G,E))\cong \ZZ[1/2]\cong K_1(\Ff(G,E)).\]
We can describe $C^*(G,E)$ as the crossed product of the simple $A\TT$-algebra $\Ff(G,E)$ by an endomorphism. Using \cite{De1}, its $K$-theory is given by
\[K_0(C^*(G,E))\cong \ker(id-\Phi_1)\oplus \ZZ/(id-\Phi_0)\ZZ\cong 0,\]\[K_1(C^*(G,E))\cong \ker(id-\Phi_0)\oplus \ZZ/(id-\Phi_1)\ZZ\cong 0.\]

\medskip

Now we compute the homology of the Exel-Pardo groupoid $\Gg(G,E)$.
Its unit space $E^\infty$  is a disjoint union of three Cantor sets $uE^\infty\cup vE^\infty\cup wE^\infty$. The kernel of the cocycle $\rho:\Gg(G,E)\to \ZZ,\;\; \rho([\alpha, g,\beta;\xi])=|\alpha|-|\beta|$ is the minimal groupoid 
\[\Hh(G,E)=\bigcup_{k\ge 1} \Hh_k(G,E),\]
where \[\Hh_k(G,E)=\{[\alpha, g,\beta;\xi]\in \Gg(G,E): |\alpha|=|\beta|=k\}\] is isomorphic to $(E^\infty\rtimes G)\times R_k$ via the map $[\alpha, g, \beta; \xi]\mapsto ((\xi, g), (\alpha, \beta))$, where $R_k$ is the equivalence relation on $E^k$ given by $(\alpha, \beta)\in R_k$ if there is $g\in G$ with $s(\alpha)=g\cdot s(\beta)$. Since $G$ is transitive, $C^*(R_k)\cong M_{6^k}$ and it follows that the groupoid $\Hh_k(G,E)$ is equivalent with $E^\infty\rtimes G$. There is a groupoid homomorphism $\tau_k:\Hh_k(G,E)\to G$ given by $ [\alpha, g,\beta;\xi]\mapsto g$ with kernel equivalent with the space $E^\infty$. Since the groupoid $G$ is Morita equivalent with the group $\ZZ$, we deduce
\[H_q(\Hh_k(G,E))\cong H_q(E^\infty\rtimes G)\cong H_q(G,C(E^\infty, \ZZ))\cong H_q(\ZZ, C(uE^\infty,\ZZ)).\]
It follows that
\[H_0(\Hh_k(G,E))\cong \ker(id-\sigma_*),\;\; H_1(\Hh_k(G,E))\cong \text{coker}(id-\sigma_*),\]
where $\sigma_*$ is induced by the action of $G$ on $C(E^\infty, \ZZ)$
and $H_q(\Hh_k(G,E)\cong 0$ for $q\ge 2$. Since the action of $G$ on $E^\infty$ is free and transitive, it follows that
\[H_0(\Hh_k(G,E))\cong H_1(\Hh_k(G,E))\cong C(uE^\infty,\ZZ).\]
Note that $H_0(\Hh_k(G,E))$ and $H_1(\Hh_k(G,E))$ are  generated  by  the indicator functions of bisections $B(\alpha, s(\alpha), \alpha; V)$ and $B(\alpha, g, \beta; V)$ for $\alpha, \beta\in E^k, g\in G_{s(\beta)}^{s(\alpha)}$ and for open subsets $V\subseteq Z(\alpha)$ and  $V\subseteq Z(\beta)$, respectively. Also,  $[\chi_{B(\alpha, g, \beta; V)}]=[\chi_{B(\alpha', g', \beta'; V')}]$ if and only if $V=V'$. 
Using the map \[j_k:\Hh_k(G,E)\hookrightarrow \Hh_{k+1}(G,E),\;\; j_k([\alpha, g, \beta; \beta x\mu])=[\alpha y, g|_x,\beta x; \beta x\mu],\] we obtain that 
$H_0(\Hh_k(G,E))\to H_0(\Hh_{k+1}(G,E))$ is given by
\[[\chi_{B(\alpha, u, \alpha ;V)}]\mapsto [\chi_{B(\alpha e_1, u, \alpha e_1; V)}]+[\chi_{B(\alpha e_3, v, \alpha e_3; V)}]\]
and
$H_1(\Hh_k(G,E))\to H_1(\Hh_{k+1}(G,E))$ is given by
\[[\chi_{B(\alpha, u, \beta; V)}]\mapsto [\chi_{B(\alpha e_1, u, \beta e_1; V)}]+[\chi_{B(\alpha e_3, v, \beta e_3; V)}].\]

We obtain $\ds H_i(\Hh(G,E))=\varinjlim_{k\to\infty}(C(uE^\infty,\ZZ), 2)\cong \ZZ[1/2]$ for $i=0,1$.

Now the groupoid $\Hh(G,E)$ is similar to $\Gg(G,E)\times_\rho\ZZ$, so we have a long exact sequence
\[0\longleftarrow H_0(\Gg(G,E))\longleftarrow H_0(\Hh(G,E))\stackrel{id-\rho_*}{\longleftarrow}H_0(\Hh(G,E))\longleftarrow H_1(\Gg(G,E))\]\[\hspace{100mm}\uparrow\]\[\hspace{30mm}0\longrightarrow H_2(\Gg(G,E))\longrightarrow H_1(\Hh(G,E))\stackrel{id-\rho_*}{\longrightarrow} H_1(\Hh(G,E))\]
where $\rho_*$ is the map induced by the action $\hat{\rho}: \ZZ\curvearrowright \Gg(G,E)\times_{\rho}\ZZ$ which takes $(\gamma, n)$ into $(\gamma, n+1)$. 

The map
$\rho_*:H_0(\Gg(G,E)\times_{\rho}\ZZ)\to H_0(\Gg(G,E)\times_{\rho}\ZZ)$ is given by
\[[\chi_{B(\alpha, s(\alpha), \alpha;Z(\alpha))\times\{0\}}]\mapsto [\chi_{B(\alpha, s(\alpha), \alpha;Z(\alpha))\times\{1\}}]\]
Consider $U=B(\alpha, u, \alpha e_1; Z(\alpha e_1))\times \{1\}\subseteq \Gg(G,E)\times_{\rho}\ZZ$ with
\[U^{-1}=B(\alpha e_1,u,\alpha;Z(\alpha))\times\{0\}.\]
Since
\[U^{-1}(B(\alpha, u, \alpha;Z(\alpha))\times\{1\})U=B(\alpha e_1, u, \alpha e_1;Z(\alpha e_1))\times \{0\},\]
it follows that in $H_0(\Gg(G,E)\times_{\rho}\ZZ)$ we have \[[\chi_{B(\alpha, u,\alpha;Z(\alpha))\times\{1\}}]=[\chi_{B(\alpha e_1, u, \alpha e_1;Z(\alpha e_1))\times \{0\}}]\] 
and
\[\rho_*([\chi_{B(\alpha, u, \alpha;Z(\alpha))\times\{0\}}])=[\chi_{B(\alpha e_1, u, \alpha e_1;Z(\alpha e_1))\times \{0\}}].\]
Hence $\rho_*$ on  $H_0(\Gg(G,E)\times_{\rho}\ZZ)\cong H_0(\Hh(G,E))\cong \ZZ[1/2]$  is multiplication by $1/2$.

Similarly,
$\rho_*:H_1(\Gg(G,E)\times_{\rho}\ZZ)\to H_1(\Gg(G,E)\times_{\rho}\ZZ)$ is given by
\[[\chi_{B(\alpha, g, \beta;Z(\beta))\times\{0\}}]\mapsto [\chi_{B(\alpha, g, \beta;Z(\beta))\times\{1\}}]\]
and if $U=B(\alpha, u, \alpha e_1; Z(\alpha e_1)\times \{1\}\subseteq \Gg(G,E)\times_{\rho}\ZZ$ we have
\[U^{-1}(B(\alpha, u, \beta;Z(\beta))\times\{1\})U=B(\alpha e_1, u, \beta e_1;Z(\beta e_1))\times \{0\}.\]
It follows that in $H_1(\Gg(G,E)\times_{\rho}\ZZ)$ we have \[[\chi_{B(\alpha, u,\beta;Z(\beta))\times\{1\}}]=[\chi_{B(\alpha e_1, u, \beta e_1;Z(\beta e_1))\times \{0\}}]\] 
and
\[\rho_*([\chi_{B(\alpha, u, \beta;Z(\beta))\times\{0\}}])=[\chi_{B(\alpha e_1, u, \beta e_1;Z(\beta e_1))\times \{0\}}],\]
so $\rho_*$ on $H_1(\Gg(G,E)\times_{\rho}\ZZ)\cong H_1(\Hh(G,E))\cong \ZZ[1/2]$ is also multiplication by $1/2$.

From the long exact sequence we obtain
\[H_0(\Gg(G,E))\cong \text{coker}(id-\rho_*),\; H_2(\Gg(G,E))\cong \ker (id-\rho_*)\]
and
\[0\to \text{coker}(id-\rho_*)\to  H_1(\Gg(G,E))\to \ker(id-\rho_*)\to 0.\]
It follows that
\[H_0(\Gg(G,E))\cong H_1(\Gg(G,E))\cong H_2(\Gg(G,E))\cong 0\]
and $H_q(\Gg(G,E)\cong 0$ for $q\ge 3$.

\bigskip

\end{document}